\documentclass{amsart}

\usepackage{amsthm}
\usepackage{amssymb}
\usepackage{bbm}
\usepackage{mathtools}

%My Macros
%custom commands

\newcommand{\x}{\times}
\newcommand{\p}{\partial}

\newcommand{\id}{\mathrm{id}}
\renewcommand{\d}{\mathrm{d}}
\newcommand{\e}{\mathrm{e}}
\newcommand{\irm}{\mathrm{i}}

\newcommand{\Lap}{\mathrm{\Delta}}

\newcommand{\eqnb}{\begin{equation}}
\newcommand{\eqnbs}{\begin{equation*}}
\newcommand{\eqnbsa}{\begin{equation*}\begin{aligned}}
\newcommand{\eqnba}{\begin{equation}\begin{aligned}}
\newcommand{\eqnbl}[1]{\begin{equation}\label{#1}}
\newcommand{\eqnbal}[1]{\begin{equation}\label{#1}\begin{aligned}}
\newcommand{\eqnes}{\end{equation*}}
\newcommand{\eqne}{\end{equation}}
\newcommand{\eqnesa}{\end{aligned}\end{equation*}}
\newcommand{\eqnea}{\end{aligned}\end{equation}}

\newcommand{\re}[1]{(\ref{#1})}
\newcommand{\oneChar}{\hspace{11pt}}%insert space of width equivalent to one character 
\newcommand{\comment}[1]{}

%BB and mathcal letters
\newcommand{\RR}{\mathbb{R}}
\newcommand{\TT}{\mathbb{T}}
\newcommand{\PP}{\mathbb{P}}
\newcommand{\ZZ}{\mathbb{Z}}

%counters
\newcounter{ThmCount}
\newcounter{MainThmIndex}
\newcounter{TempCounter}

%Theorems
\newtheorem{theorem}[ThmCount]{Theorem}%[section]
\newtheorem{proposition}[ThmCount]{Proposition}%[section]
\newtheorem{lemma}[ThmCount]{Lemma}%[section]
\newtheorem{corollary}[ThmCount]{Corollary}%[section]
{\bf}{\it}
%\numberwithin{apxlem}{section}

\begin{document}
\title[A model for the NS equations using magnetization variables]{On a model for the Navier--Stokes equations using magnetization variables}
\author{Benjamin C. Pooley}
\address {Mathematics Institute, University of Warwick, Coventry, CV4 7AL, UK}
\email{ ben.c.pooley\makeatletter @\makeatother gmail.com}

\begin{abstract}
It is known that in a classical setting, the Navier--Stokes equations can be reformulated in terms of so-called magnetization variables $w$ that satisfy
\eqnbl{Abs_magform}
\p_tw + (\PP w \cdot\nabla)w + (\nabla \PP w)^\top w - \Lap w =0,
\eqne and relate to the velocity $u$ via a Leray projection $u=\PP w$. We will prove the equivalence of these formulations in the setting of weak solutions that are also in $L^\infty(0,T;H^{1/2})\cap L^2(0,T;H^{3/2})$ on the 3-dimensional torus.

Our main focus is the proof of global well-posedness in $H^{1/2}$ for a new variant of \re{Abs_magform}, where $\PP w$ is replaced by $w$ in the second nonlinear term:
\eqnbl{Abs_Simplified}
\p_tw + (\PP w \cdot\nabla)w + \frac{1}{2}\nabla|w|^2- \Lap w =0.
\eqne This is based on a maximum principle, analogous to a similar property of the Burgers equations. 

\smallskip
\noindent \textbf{Keywords:} Magnetization variables, Navier--Stokes equations, Burgers equations, Sobolev spaces, maximum principle, global well-posedness 
\end{abstract}

\setcounter{equation}{0}
\maketitle
%############################################
\section{Introduction}
The 3D Navier--Stokes equations model the flow of an incompressible viscous fluid and comprise the following system:  
\eqnbl{eqNSE1}
\p_tu + (u\cdot\nabla)u- \nu\Lap u +\nabla p = 0,
\eqne
\eqnbl{eqNSE2}
\nabla\cdot u=0,\ u(0,x)=u_0(x).
\eqne
Here the velocity $u(x,t)$ is an unknown evolving vectorfield and $p(x,t)$ is the unknown scalar pressure. The viscosity $\nu>0$ will not play a significant role in our analysis, so we take $\nu=1$ hereafter. 

Global existence of weak solutions $u\in L^\infty(0,T;L^2_\sigma)\cap L^2(0,T;H^1)$ satisfying a certain energy inequality in $\RR^3$ has been known since 1934, due to the fundamental contributions by Leray \cite{Leray_1934}. Since then there has been a great deal of progress, for example in the study of local well-posedness and global existence for small-data in certain critical spaces, as well as a number of important partial regularity results. However, the question of whether a function space exists in which we have global well-posedness for arbitrary initial data remains a major open problem. 

For further discussion of some of the more well-known theory of the Navier--Stokes equations see, for example, \cite{ConstFoias},  \cite{Fefferman_Clay}, \cite{Fujita_Kato_1964}, \cite{LadyzhenskayaFluidBook} and \cite{JCR_NSE_book}.

Given the challenge posed by the global well-posedness problem for this system, it can be useful to consider model problems. In this paper we present a natural model of the Navier--Stokes equations arising from the magnetization-variables formulation via a modification of one of the nonlinear terms that does not affect the scaling of the equations. For this system we can prove a global well-posedness result by virtue of a Burgers-type maximum principle.

The magnetization-variables formulation (also ``Kuzmin-Oseledets'' or ``velicity'' formulation) is more well known in the study of the Euler equations, but in the case of the Navier--Stokes system it has previously been discussed in, for example, \cite{MontSPok_2002} and \cite{Chorin_Book}. Denoting the usual fluid velocity by $u$, this formulation comprises the following system, where $w$ is called the magnetization variable: 
\eqnbl{eqNSEW1}
\p_tw + (u\cdot\nabla)w +(\nabla u)^\top w - \Lap w = 0
\eqne   
\eqnbl{eqNSEW2}
u=\PP w.
\eqne
Here $\PP$ denotes the Leray projection of $L^2$ onto $L^2_\sigma$, the closure of divergence-free functions. Unless stated otherwise, the analysis in this paper will take place under periodic boundary conditions, and the spatial domain will be denoted by \[\TT^3\coloneqq\RR^3/2\pi\ZZ^3.\]

In Section \ref{secDerivRegularity} we will review the equivalence between the two formulations for classical solutions before proving new results about the correspondence in a weak setting. Specifically, in the context of weak solutions on $\TT^3$ (which are defined below), we will show that for a weak solution $w\in L^\infty(0,T;L^2)\cap  L^2(0,T;H^1)$ the projection $\PP w$ is a weak solution of the Navier--Stokes equations. Constructing a solution $w$ from a solution $u$ of the Navier--Stokes equations is less straightforward, however we prove that if $u\in L^\infty(0,T;H^{1/2})\cap L^2(0,T;H^{3/2})$ then there exists a weak solution of (\ref{eqNSEW1}--\ref{eqNSEW2}), with the additional regularity $w\in L^\infty(0,T;H^{1/2})\cap L^2(0,T;H^{3/2})$, for any $w_0\in H^{1/2}$ such that $u_0=\PP w_0$. Moreover, $w$ is unique in this class.

Section \ref{secBurgersVariant} contains our main result. Replacing $u=\PP w$ with $w$ in the second nonlinear term of \re{eqNSEW1}, we arrive at the model system
\eqnbl{eqBurgersW1_pre}
\p_tw+((\PP w)\cdot\nabla)w + \frac{1}{2}\nabla |w|^2 - \Lap w = 0.
\eqne
Noting that this system admits a maximum principle, akin to one that holds for the Burgers equations, we adapt arguments applicable to the latter system (see \cite{P_JCR_Burgers_2015}) to prove the following theorem (note that weak solutions will be defined carefully later). 

\setcounter{MainThmIndex}{\value{ThmCount}}%counter set
\begin{theorem}\label{thmH1/2}
Given $w_0\in \dot H^{1/2}(\TT^3)$ there exists a global weak solution $w$ of \re{eqBurgersW1_pre} with $w\in C([0,\infty);\dot H^{1/2})\cap L^2(0,\infty;\dot H^{3/2})$, unique in this class, such that $w(0,x)=w_0(x)$ for all $x\in\TT^3$. Moreover $w\in C^1((0,\infty);C(\TT^3))\cap C((0,\infty);C^2(\TT^3))$ is a classical solution, except at time $t=0$.
\end{theorem}

Our approach to the question of global well-posedness for the Navier--Stokes equations, via the analysis of a reformulation with modified nonlinearity, is in the spirit of other recent work. For example, Chae \cite{Chae_Model_2015} discusses the equivalence between the Navier--Stokes equations and the system
\eqnbs
\left\{
\begin{aligned}
\p_tu +R\x R\x (u\x\omega) -\Lap u=0,\\
\omega=\nabla\x u
\end{aligned}
\right.
\eqnes
where 
\[
R\x u \coloneqq (R_2 u_3 - R_3u_2, R_3u_1-R_1u_3, R_1u_2-R_2u_1)
\] is a combination of the Riesz transforms $R_1$, $R_2$ and $R_3$ on $\RR^3$ applied to $u=(u_1,u_2,u_3)$. He then shows that the simplified system
\eqnbs
\left\{
\begin{aligned}
\p_tu +R\x (u\x\omega) -\Lap u=0,\\
\omega=\nabla\x u
\end{aligned}
\right.
\eqnes
is globally well-posed, in the sense of weak solutions $u\in C([0,T);H^m(\RR^3))\cap L^2(0,T;H^{m+1}(\RR^3))$ for $m>5/2$. 

In contrast, it is shown by Tao \cite{Tao_2014}, that there exists an ``averaged'' version of the classical nonlinear term such that the modified system admits a smooth solution that blows up in finite time.

Throughout this paper we will find estimates using the fractional derivative operator $\Lambda^s$, defined by
\[
\Lambda^s f(x)\coloneqq\sum_{k\in\ZZ^3}|k|^s\hat f_k\e^{\irm k\cdot x}\in L^2(\TT^3),
\]  
for
\[
f(x)=\sum_{k\in\ZZ^3}\hat f_k\e^{\irm k\cdot x} \in H^s(\TT^3).
\]
We also denote the seminorm $\|\Lambda^s\cdot\|_{L^2}$ on $H^s$ by $\|\cdot\|_s$. Sometimes we will use the fact that $\|f\|_s\leq\|f\|_t$ for $0<s\leq t$ and that $\Lambda^2=(-\Lap)$. Note that the Sobolev norm $\|\cdot\|_{H^s}$ defined by
\[
\|f\|_{H^s}\coloneqq \left(\sum_{k\in\ZZ^3}(1+|k|^{2s})|\hat f_k|^2\right)^{1/2}
\]
is equivalent to the norm $\|\cdot\|_{L^2}+\|\cdot\|_{s}$.

%############################################
\section{Derivation and regularity}\label{secDerivRegularity}
\subsection{Classical solutions}\label{subsecClassicalSolns}%*******************************************
The following propositions show that the systems (\ref{eqNSE1},\ref{eqNSE2}) and 
 (\ref{eqNSEW1})+(\ref{eqNSEW2})  are equivalent for classical solutions on the interior of a domain $\Omega\subseteq\RR^3$ (or on the torus $\TT^3$). The manipulations in the proofs are similar to the derivation of the Weber formula \cite{Weber} for the Euler equations as described by Constantin \cite{Const_EL_Local_2000}, see also \cite{P_JCR_EL_2015}. 
\begin{proposition}\label{propELNS_NS}
If  $u,w\in C^1([0,T];C^2(\Omega))$ satisfy \re{eqNSEW1} and $u=\PP w$ then there exists $p\in C([0,T];C^1(\Omega))$ such that $(u,p)$ is a solution of the Navier--Stokes equations \re{eqNSE1}.
\end{proposition}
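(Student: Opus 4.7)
The plan is to decompose $w$ into its divergence-free part $u=\PP w$ and gradient part $\nabla\phi$, then rewrite the nonlinearity so that all extra terms collapse into a single gradient, which we identify with $\nabla p$.

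First I would invoke the Hodge decomposition on $\TT^3$ (or on $\Omega$ with appropriate boundary data) to write $w=u+\nabla\phi$, where $u=\PP w$ is divergence-free and $\phi$ is determined (up to a function of $t$) by solving the Poisson equation $\Lap\phi=\nabla\cdot w$. Since $w\in C^1([0,T];C^2)$, elliptic regularity yields $\phi$ smooth enough that $\nabla\phi\in C^1([0,T];C^2)$ and in particular $\phi_t$ exists and is spatially $C^1$.

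Next I would compute the nonlinear terms componentwise. A direct expansion gives
\eqnbs
[(u\cdot\nabla)w]_i = u_j\p_j u_i + u_j\p_j\p_i\phi,
\qquad
[(\nabla u)^\top w]_i = (\p_i u_j)u_j + (\p_i u_j)\p_j\phi.
\eqnes
Using $(\p_i u_j)u_j=\p_i(|u|^2/2)$ and the identity $u_j\p_j\p_i\phi=\p_i(u_j\p_j\phi)-(\p_i u_j)\p_j\phi$, the cross terms involving $\p_j\phi$ cancel and one obtains
\eqnbs
(u\cdot\nabla)w + (\nabla u)^\top w = (u\cdot\nabla)u + \nabla\!\left(u\cdot\nabla\phi + \tfrac{1}{2}|u|^2\right).
\eqnes

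Substituting this and $w_t=u_t+\nabla\phi_t$, $\Lap w=\Lap u + \nabla\Lap\phi$ into \re{eqNSEW1} yields
\eqnbs
u_t + (u\cdot\nabla)u - \Lap u + \nabla\!\left(\phi_t + u\cdot\nabla\phi + \tfrac{1}{2}|u|^2 - \Lap\phi\right) = 0,
\eqnes
so defining $p\coloneqq \phi_t + u\cdot\nabla\phi + \tfrac{1}{2}|u|^2 - \Lap\phi$ gives exactly \re{eqNSE1}. The incompressibility \re{eqNSE2} is immediate from $u=\PP w$. For the regularity of $p$, the cleanest route is to note that $\nabla p$ is determined by the equation itself as $\Lap u - u_t - (u\cdot\nabla)u \in C^0([0,T];C^0(\Omega))$, hence $p\in C^0([0,T];C^1(\Omega))$.

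The main potential obstacle is bookkeeping in the algebraic identity and, on a bounded domain $\Omega$, justifying the Hodge decomposition with enough regularity for $\phi$; on $\TT^3$ this is painless via Fourier series. The rearrangement of the nonlinear term is the key computation and is essentially the Weber-formula manipulation alluded to in the introduction.
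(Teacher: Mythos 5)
Your proposal is correct and is essentially the paper's own argument: the same Helmholtz decomposition $w=u+\nabla\phi$, the same commutation identity $u_j\p_j\p_i\phi=\p_i(u_j\p_j\phi)-(\p_i u_j)\p_j\phi$ written componentwise, and the identical pressure $p=\phi_t+(u\cdot\nabla)\phi+\tfrac{1}{2}|u|^2-\Lap\phi$.
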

\begin{proof}
By the Helmholtz decomposition (see \cite{JCR_NSE_book}, for example), there exists $q\in C^1([0,T];C^3(\Omega))$ such that
\[
u=w-\nabla q.
\]
It is clear that $u$ is divergence free so we must prove that \re{eqNSE1} is satisfied. Indeed we have 
\eqnbsa
\p_tu+(u\cdot\nabla)u -\Lap u&=\p_tw+(u\cdot\nabla)w -\Lap w -\nabla(\p_tq-\Lap q)-(u\cdot\nabla)\nabla q\\
&=w_t+(u\cdot\nabla)w -\Lap w\\
&\oneChar -\nabla(q_t-\Lap q+ (u\cdot\nabla)q +\tfrac{1}{2}|u|^2) + (\nabla u)^\top w\\
&=-\nabla p,
\eqnesa
where $p\coloneqq(q_t-\Lap q+ (u\cdot\nabla)q +\frac{1}{2}|u|^2)$. In the second line we used the commutation relation (see \cite{Const_EL_Local_2000})
\eqnbsa
(u\cdot\nabla)\nabla q = \nabla[(u\cdot\nabla)q] - (\nabla u)^\top \nabla q &=\nabla[(u\cdot\nabla)q] - (\nabla u)^\top (w-u)\\
&=\nabla[(u\cdot\nabla)q] +\frac{1}{2}\nabla |u|^2 - (\nabla u)^\top w.\qedhere
\eqnesa
\end{proof}

\begin{proposition}\label{propNS_ELNS}
If $u\in C^1([0,T];C^2(\Omega))$ and $p\in C([0,T];C^1(\Omega))$ satisfy the Navier--Stokes equations then for any $w_0\in C^2(\Omega)$ such that $\PP w_0=u(0)$, there exists a unique $w\in C^1([0,T];C^2(\Omega))$ such that $u,w$ satisfy \re{eqNSEW1} and $u=\PP w$.
\end{proposition}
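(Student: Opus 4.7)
The plan is to seek $w$ in the form $w = u + \nabla q$, reducing the problem to a scalar linear parabolic equation for $q$. Since $\PP w_0 = u_0 = u(0)$, the Helmholtz decomposition provides a scalar $q_0 \in C^3(\Omega)$ (unique up to an additive constant) with $w_0 - u(0) = \nabla q_0$; this will serve as the initial condition for $q$.

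Substituting $w = u + \nabla q$ into \eqref{eqNSEW1} and using the Navier--Stokes equation \eqref{eqNSE1} to replace $u_t + (u\cdot\nabla)u - \Lap u$ by $-\nabla p$, I get
\eqnbs
-\nabla p + \nabla q_t + (u\cdot\nabla)\nabla q + (\nabla u)^\top u + (\nabla u)^\top \nabla q - \nabla \Lap q = 0.
\eqnes
The identities $(\nabla u)^\top u = \tfrac12\nabla|u|^2$ and the commutation relation $(u\cdot\nabla)\nabla q = \nabla[(u\cdot\nabla)q] - (\nabla u)^\top \nabla q$ (used in the proof of Proposition \ref{propELNS_NS}) collapse this to
\eqnbs
\nabla\bigl(q_t + (u\cdot\nabla)q - \Lap q + \tfrac12 |u|^2 - p\bigr) = 0.
\eqnes
Thus it suffices to choose $q$ solving the linear convection--diffusion equation $q_t + (u\cdot\nabla)q - \Lap q = p - \tfrac12|u|^2$ with $q(0) = q_0$; the additive function of time that this procedure leaves free is irrelevant since only $\nabla q$ enters the definition of $w$.

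Existence and regularity of $q$ follow from standard linear parabolic theory: the coefficient $u$ and forcing $p - \tfrac12|u|^2$ are as smooth as the hypotheses allow, so on $\TT^3$ one can construct $q \in C^1([0,T];C^3(\TT^3))$ by, e.g., a Duhamel representation with the heat semigroup and a fixed-point argument for the transport term, or by appealing directly to classical Schauder estimates. Setting $w \coloneqq u + \nabla q$ then yields the required solution of \eqref{eqNSEW1}, and $\PP w = \PP u + \PP \nabla q = u$ by construction.

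For uniqueness, observe that since $u$ is given and fixed, \eqref{eqNSEW1} is \emph{linear} in $w$. If $w_1, w_2$ are two solutions with $\PP w_i = u$ and $w_1(0) = w_2(0) = w_0$, then $v \coloneqq w_1 - w_2$ solves the same linear equation with zero initial data; a standard energy estimate (multiplying by $v$ and integrating, using that $\nabla \cdot u = 0$ to handle the transport term and Grönwall's inequality to absorb $(\nabla u)^\top v$) forces $v \equiv 0$. The main obstacle in the above is really a bookkeeping one: verifying that the solution $q$ of the scalar parabolic equation inherits enough spatial regularity (one derivative more than $w$) to justify all the manipulations, which on the torus is straightforward but would require a little care on a general bounded $\Omega$ with boundary.
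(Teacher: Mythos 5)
Your proof follows essentially the same route as the paper: write $w = u + \nabla q$, use the commutation identity to reduce \re{eqNSEW1} to the scalar convection--diffusion equation $q_t + (u\cdot\nabla)q - \Lap q = p - \tfrac{1}{2}|u|^2$, and invoke standard linear parabolic theory. The only differences are minor: you take $q(0)=q_0$ with $\nabla q_0 = w_0 - u(0)$ (which is in fact the right choice for a general $w_0$, where the paper's proof takes $q(0)=0$), and you prove uniqueness by an energy estimate on $w_1-w_2$ (valid on the torus, needing boundary conditions on a general $\Omega$) rather than by the paper's observation that the difference of two solutions with $\PP w_i = u$ is a gradient $\nabla\tilde q$ with $\tilde q$ solving a homogeneous scalar problem and hence depending only on time.
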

\begin{proof}
By standard techniques for parabolic PDEs (see \cite{Evans_PDEBook}, for example) there exists a unique $q\in C^1([0,T];C^3(\Omega))$ such that 
\[
\p_t q +(u\cdot\nabla)q-\Lap q = p-\frac{1}{2}|u|^2
\]
and $q(t,x)=0$ for all $(t,x)\in(\{0\}\x\Omega)\cup ([0,T)\x\p\Omega)$. If we set $w\coloneqq u+\nabla q$ then $u=\PP(w)$ and
\eqnbsa
\p_tw + (u\cdot\nabla) w +(\nabla u)^\top w -\Lap w = \p_tu + (u\cdot\nabla)u + \frac{1}{2}\nabla |u|^2 -\Lap u\\
\oneChar +\nabla \p_tq + (u\cdot\nabla)\nabla q + (\nabla u)^\top\nabla q -\nabla\Lap q\\
= \nabla \left( - p + \frac{1}{2}|u|^2 + \p_tq +(u\cdot\nabla)q-\Lap q \right)=0.
\eqnesa
Hence there exists $w\in C^1([0,T];C^2(\Omega))$ such that $u,w$ satisfy \re{eqNSEW1} and \re{eqNSEW2}.

Uniqueness follows from the fact that any two solutions $w_1$ and $w_2$ differ only by a gradient $\nabla \tilde q$ for some $\tilde q$ that satisfies
\[
\p_t \tilde q +(u\cdot\nabla) \tilde q-\Lap\tilde q = h(t);\, \tilde q(0,x) =C
\] 
for some function $h$ that is independent of $x$, and some constant $C$. Hence $\tilde q$ depends only on time, and $\nabla q\equiv 0$.
 \end{proof}

\subsection{Partial equivalence for weak solutions}%**************************************

Proposition \ref{propELNS_NS} can be strengthened to apply to weak solutions of \re{eqNSEW1} and \re{eqNSEW2}. We say that $w\in L^\infty(0,T;L^2)\cap L^2(0,T;H^1)$ is a weak solution of \re{eqNSEW1} for initial data $w_0\in L^2(\TT^3)$ if for all $\phi\in C^{\infty}_c([0,T)\x\TT^3)$ and all $t\in[0,T)$
\eqnbal{eqWeakNSEW}
 (w(t),\phi(t))_{L^2}+\int_0^t ((\PP w\cdot\nabla)w + (\nabla\PP w)^\top w,\phi)_{L^2} +(\nabla w,\nabla\phi)_{L^2}\,\d s\\
=(w_0,\phi(0))_{L^2}+\int_0^t(w(s),\p_t\phi(s))_{L^2}\,\d s.
\eqnea
We call a weak solution $w$ an $H^{1/2}$-solution if it has the additional regularity $w\in L^\infty(0,T; H^{1/2})\cap L^2(0,T; H^{3/2})$.

For the Navier--Stokes equations we use a similar definition of weak solutions: $u\in L^\infty(0,T;L^2)\cap L^2(0,T;H^1)$ is a weak solution to \re{eqNSE1} corresponding to the initial data $u_0\in L^2(\TT^3)$ if for all divergence-free test functions $\psi\in C^\infty_c([0,T)\x\TT^3)$ with $\nabla\cdot\psi\equiv 0$ we have
\begin{multline}\label{eqNS_wk}
(u(t),\psi(t))_{L^2}+\int_0^t((u\cdot\nabla) u,\psi)_{L^2} + (\nabla u,\nabla \psi)_{L^2}\d s\\
 = (u_0,\psi(0))_{L^2} + \int_0^t(u(s),\p_t\phi(s))_{L^2}\,\d s
\end{multline}
for all $t\in[0,T)$.

The proof of Proposition \ref{propELNS_NS} can easily be adapted to the setting of weak solutions, giving the following result:
\begin{proposition}\label{propELNS_NS_wk}
Suppose that $w\in L^\infty(0,T;L^2)\cap L^2(0,T;H^1) $ is a weak solution of \re{eqNSEW1} for initial data $w_0\in L^2(\TT^3)$. Then $u\coloneqq \PP w$ is a weak solution of the Navier--Stokes equations for initial data $u_0=\PP w_0$.
\end{proposition}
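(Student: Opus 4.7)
The plan is to substitute divergence-free test functions $\psi\in C^\infty_c([0,T)\x\TT^3)$ with $\nabla\cdot\psi=0$ into the weak form \re{eqWeakNSEW} of the magnetization equation and show that each term reduces to the corresponding term of the Navier--Stokes weak form \re{eqNS_wk}. Since the Leray projection $\PP$ is the $L^2$-orthogonal projection onto divergence-free fields, it is self-adjoint on $L^2$ and satisfies $\PP\psi=\psi$ (and $\PP\p_t\psi=\p_t\psi$). Hence
\[
(w(t),\psi(t))_{L^2}=(\PP w(t),\psi(t))_{L^2}=(u(t),\psi(t))_{L^2},
\]
and the analogous identities relate $(w_0,\psi(0))_{L^2}$ with $(u_0,\psi(0))_{L^2}$ and $\int_0^t(w,\p_t\psi)_{L^2}\,\d s$ with $\int_0^t(u,\p_t\psi)_{L^2}\,\d s$.

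For the viscous term I would use the Helmholtz decomposition $w=u+\nabla q$, where $q\in L^2(0,T;H^2)$ because $(I-\PP)w\in L^2(0,T;H^1)$ (boundedness of $\PP$ on $H^1$). An integration by parts using $\nabla\cdot\psi=0$ shows $(\nabla^2 q,\nabla\psi)_{L^2}=0$, so $(\nabla w,\nabla\psi)_{L^2}=(\nabla u,\nabla\psi)_{L^2}$.

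The heart of the argument is the nonlinear term. Substituting $w=u+\nabla q$ into $(u\cdot\nabla)w+(\nabla u)^\top w$ and invoking the commutation identity $(u\cdot\nabla)\nabla q+(\nabla u)^\top\nabla q=\nabla[(u\cdot\nabla)q]$ already used in the proof of Proposition~\ref{propELNS_NS}, together with $(\nabla u)^\top u=\tfrac{1}{2}\nabla|u|^2$, I would obtain
\[
(u\cdot\nabla)w+(\nabla u)^\top w=(u\cdot\nabla)u+\nabla\!\bigl[\tfrac{1}{2}|u|^2+(u\cdot\nabla)q\bigr].
\]
Pairing with the divergence-free $\psi$ annihilates the gradient contribution, leaving exactly $((u\cdot\nabla)u,\psi)_{L^2}$. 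Reassembling the simplified terms yields \re{eqNS_wk} for $u=\PP w$ with initial datum $u_0=\PP w_0$.

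The main obstacle is justifying the nonlinear identity at the given regularity. With $w\in L^\infty(0,T;L^2)\cap L^2(0,T;H^1)$, Sobolev embedding gives $u,\nabla q\in L^2(0,T;L^6)$ while $\nabla u,\nabla^2 q\in L^2(0,T;L^2)$, so each product arising in the expansion lies in $L^1(0,T;L^{3/2}(\TT^3))$. The Leibniz-type commutation can then be verified by mollifying $w$ in space, applying the identity pointwise for the smooth approximation, and passing to the limit against the smooth test function $\psi$; the resulting equality is what is needed to conclude.
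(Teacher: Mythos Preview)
Your proof is correct and follows essentially the same approach as the paper: the paper packages the key step as the identity $((\PP v\cdot\nabla)v+(\nabla\PP v)^\top v,\psi)_{L^2}=((\PP v\cdot\nabla)\PP v,\psi)_{L^2}$ for divergence-free $\psi$, proved first for smooth $v$ (via the same commutation formula you use) and then by approximation in $H^1$, after which the linear terms are handled by the self-adjointness of $\PP$ exactly as you describe. Your treatment is slightly more explicit about the Helmholtz decomposition and the $L^1(0,T;L^{3/2})$ regularity of the products, but the argument is the same.
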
  
The details of the proof are not difficult and are omitted. We note only that it suffices to show that if $v\in H^1(\TT^3)$ then for all $\psi\in C^\infty_c(\TT^3)$ with $\nabla\cdot\psi =0$ we have
\eqnbl{propELNS_NS_wk:eq1}
((\PP v\cdot\nabla)v+(\nabla\PP v)^\top v,\psi)_{L^2}=((\PP v\cdot\nabla)\PP v,\psi)_{L^2}.
\eqne 
For smooth functions this follows from the proof of Proposition \ref{propELNS_NS} and the full justification is not difficult.
	Another consequence of \re{propELNS_NS_wk:eq1} is the following partial converse.
\begin{corollary}
If $u\in L^\infty(0,T;L^2)\cap L^2(0,T;H^1) $ is a weak solution of the Navier--Stokes equations then, for any $w\in  L^\infty(0,T;L^2)\cap L^2(0,T;H^1) $ such that $\PP w=u $, $w$ satisfies \re{eqWeakNSEW} for all test functions $\phi\in C^\infty_c([0,T)\x\TT^3)$ that are divergence free. 
\end{corollary}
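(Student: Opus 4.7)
The plan is to reduce the weak $w$--equation \re{eqWeakNSEW}, when tested against a divergence-free $\phi\in C^\infty_c([0,T)\x\TT^3)$, term by term to the weak Navier--Stokes identity \re{eqNS_wk} for $u$. I would begin by using the Helmholtz decomposition to write $w=u+\nabla q$, which is available because $\PP w=u$; since $\nabla q=w-u\in L^\infty(0,T;L^2)\cap L^2(0,T;H^1)$, the mean-zero representative $q(t)$ lies in $H^2(\TT^3)$ for a.e.\ $t$.

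Next, I would observe that $\phi(t)$, $\phi(0)$ and $\p_t\phi(s)$ are all divergence-free in the spatial variable, so each of the $L^2$ pairings $(w(t),\phi(t))_{L^2}$, $(w_0,\phi(0))_{L^2}$ and $(w(s),\p_t\phi(s))_{L^2}$ annihilates the gradient part of $w$ and agrees with the corresponding pairing for $u$, provided the initial datum is chosen so that $\PP w_0=u_0$. For the viscous term, two integrations by parts give
\[
(\nabla\nabla q,\nabla\phi)_{L^2}=-(\nabla q,\Lap\phi)_{L^2}=(q,\Lap(\nabla\cdot\phi))_{L^2}=0,
\]
so $(\nabla w,\nabla\phi)_{L^2}=(\nabla u,\nabla\phi)_{L^2}$.

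The core step is the nonlinear reduction: I would invoke the identity \re{propELNS_NS_wk:eq1} established in the proof of Proposition~\ref{propELNS_NS_wk} to conclude
\[
((\PP w\cdot\nabla)w+(\nabla\PP w)^\top w,\phi)_{L^2}=((u\cdot\nabla)u,\phi)_{L^2}
\]
a.e.\ in $t$. Combining the above, \re{eqWeakNSEW} tested against a divergence-free $\phi$ becomes identical to \re{eqNS_wk} for $u$ tested against $\phi$, which holds by hypothesis.

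The only real input is the nonlinear identity \re{propELNS_NS_wk:eq1}, already proved in the previous proposition; the remaining manipulations are routine exploitations of $\nabla\cdot\phi=0$ together with standard integration by parts on $\TT^3$. The initial-data compatibility $\PP w_0=u_0$ is implicit in the hypothesis $\PP w=u$ (interpreted via the weak continuity of $w$ that follows from the equation).
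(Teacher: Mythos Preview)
Your proposal is correct and follows essentially the same route as the paper: the corollary is stated as an immediate consequence of the identity \re{propELNS_NS_wk:eq1}, and the reduction of the linear terms via $(\,\nabla q,\psi)_{L^2}=0$ for divergence-free $\psi$ is exactly the ``symmetry of the Leray projection'' used in the proof of Proposition~\ref{propELNS_NS_wk}. Your explicit treatment of the viscous term and the remark on initial-data compatibility simply spell out details that the paper leaves implicit.
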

Note that this does not imply that $w$ is a weak solution of \re{eqNSEW1}, since in the definition we allowed test functions with non-zero divergence.

\subsection{Equivalence for $H^{1/2}$-solutions}\label{subsecStrongSolns}%****************************************
In  this section we will show that a weak solution of the Navier--Stokes equations with the regularity $u\in L^\infty(0,T;H^{1/2})\cap  L^2(0,T;H^{3/2}))$ corresponds to a unique $H^{1/2}$-solution $w$ of \re{eqNSEW1} such that $\PP w=u$, subject to fixing $w_0$ with $\PP w_0=u_0$. 

We begin by stating a well-posedness result for the linear system
\eqnbl{eqNSEW1_fixedU}
\p_tw+(u\cdot\nabla)w+(\nabla u)^\top w-\Lap w=0,
\eqne
where $u\in L^\infty(0,T;H^{1/2})\cap L^2(0,T;H^{3/2})$ is regarded as a fixed function, which we need not assume to be divergence free. 

Given $u$ as above, we say that $w$ is an $H^{1/2}$-solution of \re{eqNSEW1_fixedU} if $w\in L^\infty(0,T;H^{1/2})\cap L^2(0,T;H^{3/2})$ and is a weak solution, in the sense that
\begin{multline}\label{eqFixedU_Wweak}
(w(t),\phi(t))_{L^2}+\int_0^t((u\cdot\nabla)w(s) + (\nabla u)^\top w(s),\phi(s))_{L^2}\,\d s\\
=(w_0,\phi(0))_{L^2} + \int_0^t(w(s),\p_t\phi(s))\,\d s-\int_0^t(\nabla w(s),\nabla\phi(s))_{L^2}\d s
\end{multline}  
for all test functions $\phi\in C^\infty_c([0,T)\x\TT^3)$.

\begin{proposition}\label{propNSEW1_fixedU}
Fix $u\in L^\infty(0,T;H^{1/2})\cap  L^2(0,T;H^{3/2})$ and $w_0\in H^{1/2}$.  There exists a unique $H^{1/2}$-solution $w$ to \re{eqNSEW1_fixedU}. 
\end{proposition}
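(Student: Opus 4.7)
The plan is a Galerkin approximation combined with an energy estimate in $H^{1/2}$. Let $P_n$ denote the Fourier truncation onto the modes $|k|\leq n$ and let $V_n\coloneqq P_n L^2(\TT^3)$. Consider the finite-dimensional evolution
\[
\p_t w_n + P_n\bigl[(u\cdot\nabla)w_n + (\nabla u)^\top w_n\bigr] - \Lap w_n = 0, \qquad w_n(0)=P_n w_0.
\]
Because $u$ is only measurable in time, Carath\'eodory theory (rather than Picard) is required, but in any case produces a unique $w_n\in C^0([0,T];V_n)$ as long as the a priori bound below is available.

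The heart of the argument is a uniform bound for $w_n$ in $L^\infty(0,T;H^{1/2})\cap L^2(0,T;H^{3/2})$. Since $\Lambda w_n\in V_n$, I can test the Galerkin equation against it to obtain
\[
\tfrac{1}{2}\tfrac{\d}{\d t}\|w_n\|_{1/2}^2 + \|w_n\|_{3/2}^2 = -\bigl((u\cdot\nabla)w_n + (\nabla u)^\top w_n,\,\Lambda w_n\bigr)_{L^2}.
\]
For the first nonlinear term I would use H\"older together with the embeddings $H^1\hookrightarrow L^6$ and $H^{1/2}\hookrightarrow L^3$ on $\TT^3$ to get
\[
\bigl|((u\cdot\nabla)w_n,\Lambda w_n)\bigr| \leq \|u\|_{L^6}\|\nabla w_n\|_{L^3}\|\Lambda w_n\|_{L^2} \leq C\|u\|_1\|w_n\|_1\|w_n\|_{3/2},
\]
and then interpolate $\|u\|_1\leq \|u\|_{1/2}^{1/2}\|u\|_{3/2}^{1/2}$ and $\|w_n\|_1\leq \|w_n\|_{1/2}^{1/2}\|w_n\|_{3/2}^{1/2}$. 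Young's inequality then bounds this term by $\tfrac{1}{4}\|w_n\|_{3/2}^2 + C\|u\|_{1/2}^2\|u\|_{3/2}^2\|w_n\|_{1/2}^2$. The second nonlinear term admits an analogous estimate $\leq C\|u\|_{3/2}\|w_n\|_{1/2}\|w_n\|_{3/2}\leq \tfrac{1}{4}\|w_n\|_{3/2}^2 + C\|u\|_{3/2}^2\|w_n\|_{1/2}^2$. Absorbing the $\|w_n\|_{3/2}^2$ contributions into the dissipation and noting that the coefficient $\|u\|_{1/2}^2\|u\|_{3/2}^2 + \|u\|_{3/2}^2$ is integrable in $t$ (since $\|u\|_{1/2}\in L^\infty_t$ and $\|u\|_{3/2}\in L^2_t$), Gr\"onwall's inequality yields uniform bounds on $\|w_n\|_{L^\infty(0,T;H^{1/2})}$ and $\|w_n\|_{L^2(0,T;H^{3/2})}$.

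From these bounds and the equation one also derives a uniform bound on $\p_t w_n$ in $L^2(0,T;H^{-1/2})$, so Aubin--Lions compactness gives a subsequence converging strongly in $L^2(0,T;H^s)$ for any $s<3/2$. Strong convergence is enough to pass to the limit in each nonlinear term, producing the desired strong solution $w$. Uniqueness is then immediate from linearity: the difference of any two strong solutions with the same initial datum satisfies the same linear system with zero data and inherits the above energy estimate, forcing it to vanish.

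The main obstacle is ensuring the a priori estimate actually closes under the critical regularity assumption $u\in L^\infty H^{1/2}\cap L^2 H^{3/2}$. In particular, one cannot place $u$ in $L^\infty$ in $x$, so the argument relies crucially on the critical Sobolev embedding $H^{1/2}(\TT^3)\hookrightarrow L^3(\TT^3)$, the interpolation identity at the $H^1$ level, and the fact that the resulting Gr\"onwall coefficient turns out to be $L^1$ in $t$ by the assumed square integrability of $\|u\|_{3/2}$.
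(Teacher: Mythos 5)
Your overall strategy (Galerkin, testing against $\Lambda w_n$, Aubin--Lions, uniqueness by linearity) is the same as the paper's, but there is a genuine gap at exactly the point the paper takes pains over. In your estimate of the second nonlinear term you claim $|((\nabla u)^\top w_n,\Lambda w_n)_{L^2}|\leq C\|u\|_{3/2}\|w_n\|_{1/2}\|w_n\|_{3/2}$ with the \emph{homogeneous} seminorm $\|w_n\|_{1/2}$. Any H\"older splitting of this term places the undifferentiated factor $w_n$ in some $L^p$ with $p\geq 2$, and the embedding $H^{1/2}\hookrightarrow L^3$ (or $H^1\hookrightarrow L^6$) requires the full inhomogeneous norm: the seminorm $\|w_n\|_{1/2}$ does not control $\|w_n\|_{L^3}$ when $w_n$ has nonzero mean (a nonzero constant has vanishing seminorm). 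Crucially, the Galerkin system does \emph{not} conserve $\int_{\TT^3}w_n$: the integral of $(u\cdot\nabla)w_n+(\nabla u)^\top w_n$ need not vanish, even if $u$ is divergence free, so you cannot invoke a Poincar\'e-type inequality to pass from $\|w_n\|_{H^{1/2}}$ back to $\|w_n\|_{1/2}$. Since your left-hand side only controls $\tfrac{\d}{\d t}\|w_n\|_{1/2}^2$, the differential inequality does not close as written and the advertised Gr\"onwall step fails.

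Two repairs are available. The paper's route is to track the zeroth Fourier mode directly from the equation (Lemma \ref{lemMomentum}), obtaining $|\hat w_0(t)|\leq|\hat w_0(0)|+2\int_0^t\|w_n\|_{1/2}\|u\|_{1/2}$, which converts $\|w_n\|_{H^{1/2}}$ into $\|w_n\|_{1/2}$ plus an integral term; the resulting integrated inequality is no longer of Gr\"onwall type and is closed instead by a smallness-of-time argument (Lemma \ref{lemNotCircular}) iterated finitely many times over $[0,T)$. Alternatively, you could also test against $w_n$ itself, add the resulting $L^2$ energy identity to your $\Lambda w_n$ estimate, and run Gr\"onwall on the full norm $\|w_n\|_{H^{1/2}}^2$; this avoids the momentum lemma but you must then estimate $((u\cdot\nabla)w_n,w_n)_{L^2}$ without assuming $\nabla\cdot u=0$, which the proposition does not grant. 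Either way, the uniqueness step also needs one more sentence than ``inherits the above energy estimate'': a strong solution cannot be tested directly against $\Lambda w$, so the energy inequality must be justified by approximating $w$ with smooth test functions $\psi_n$ and using $\phi_n=\Lambda\psi_n$ in the weak formulation, as the paper does.
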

This can be proved using a similar approach to our work in the next section, moreover some of the arguments are simpler because this is a linear problem. We therefore omit most of the details.

One noteworthy technicality is that the system \re{eqNSEW1_fixedU} may not  conserve momentum, so we must carefully control the zeroth Fourier coefficient in the case of periodic boundary conditions. This can be dealt with following the example of the diffusive Burgers equations \cite{P_JCR_Burgers_2015}, i.e. by justifying an a priori estimate of the form
\[
|\hat w_0(t)|\leq |\hat w_0(0)|+2\int_0^t\|w\|_{1/2}\|u\|_{1/2}.
\] 
A complete proof of Proposition \ref{propNSEW1_fixedU}  can be found in \cite{Pooley_PhD}.

We now show that if we additionally assume that $u$ is a weak solution of the Navier--Stokes equaitions then $u=\PP w$, subject to choosing the initial data $w_0$ such that $\PP w_0=u_0$.

\begin{proposition}\label{propNSE_W_strong}If $u\in L^\infty(0,T;H^{1/2})\cap L^2(0,T;H^{3/2})$ is a weak solution of the  Navier--Stokes equations and $\PP w_0 =u_0$ then the corresponding $H^{1/2}$-solution $w$ of \re{eqNSEW1_fixedU} satisfies $\PP w = u$, i.e. $(u,w)$ satisfy \re{eqNSEW1} and \re{eqNSEW2} in a weak sense.
\end{proposition}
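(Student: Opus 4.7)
The strategy, signalled in the discussion preceding Proposition \ref{propNSEW1_fixedU}, is to exhibit a linear Cauchy problem (with $u$ as a fixed coefficient) satisfied weakly against divergence-free test functions by both $u$ and $\PP w$, and then to invoke uniqueness for that problem.

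First I would derive the equation satisfied by $\PP w$. Testing the weak form \re{eqFixedUWkSoln} against a divergence-free $\phi \in C^\infty_c([0,T)\x\TT^3)$, and using that $\PP$ is self-adjoint, all of the linear-in-$w$ terms collapse to terms in $\PP w$: for instance $(w,\phi)_{L^2} = (\PP w,\phi)_{L^2}$ and $(\nabla w, \nabla\phi)_{L^2} = (\nabla\PP w, \nabla\phi)_{L^2}$. For the nonlinear term, write the Helmholtz decomposition $w = \PP w + \nabla q$ and apply the commutator identity $(u\cdot\nabla)\nabla q + (\nabla u)^\top \nabla q = \nabla[(u\cdot\nabla)q]$ from the proof of Proposition \ref{propELNS_NS}; the right-hand side is a gradient and thus $L^2$-orthogonal to $\phi$, giving
\[
((u\cdot\nabla)w + (\nabla u)^\top w, \phi)_{L^2} = ((u\cdot\nabla)\PP w + (\nabla u)^\top \PP w, \phi)_{L^2}.
\]
Consequently $\PP w$ is a weak solution (tested against divergence-free $\phi$) of $\p_t v + (u\cdot\nabla)v + (\nabla u)^\top v - \Lap v = 0$ with initial data $\PP w_0 = u_0$.

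Second, I would observe that $u$ solves the same divergence-free weak problem. For any divergence-free $\phi$ one has $((\nabla u)^\top u, \phi)_{L^2} = (\tfrac{1}{2}\nabla|u|^2, \phi)_{L^2} = 0$, so adding this null term to the Navier--Stokes formulation \re{eqNS_wk} recovers exactly the equation for $\PP w$, with the same initial data.

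Finally, the uniqueness step. Setting $z \coloneqq u - \PP w$, the difference is a divergence-free element of $L^\infty(0,T;H^{1/2})\cap L^2(0,T;H^{3/2})$ with $z(0)=0$ satisfying the linear equation against all divergence-free test functions. I would mimic the uniqueness argument at the end of the proof of Proposition \ref{propNSEW1_fixedU}: approximate $z$ in the energy spaces by smooth divergence-free $\psi_n$ and test against $\Lambda\psi_n$ (divergence-free because $\Lambda$ commutes with $\PP$) to obtain an $\dot H^{1/2}$ energy estimate of exactly the same form as \re{eqFixedUUniqEstFormal}. Iterating over short intervals on which $A(t_1,t_2) < \tfrac{1}{2}$ then forces $z \equiv 0$, giving $\PP w = u$ as required. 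I expect the main technical burden to lie in this last step: verifying that divergence-free smooth approximations of $\Lambda z$ are admissible test functions and that the cancellations of Proposition \ref{propNSEW1_fixedU} carry over verbatim, which is bookkeeping closely parallel to the original argument.
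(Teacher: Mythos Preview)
Your proposal is correct and follows essentially the same route as the paper: show that both $\PP w$ and $u$ satisfy the linear equation \re{eqNSEW1_fixedU} tested against divergence-free $\phi$ (using the Helmholtz decomposition and the identity $(\nabla u)^\top u=\tfrac{1}{2}\nabla|u|^2$ respectively), then appeal to the uniqueness argument from Proposition~\ref{propNSEW1_fixedU} restricted to divergence-free test functions. Your remark that $\Lambda$ commutes with $\PP$, so that $\Lambda\psi_n$ remains an admissible divergence-free test function, is exactly the bookkeeping needed to carry that uniqueness proof over.
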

\begin{proof}
Suppose that $u\in  L^\infty(0,T;H^{1/2})\cap L^2(0,T;H^{3/2})$ is a weak solution  of the Navier--Stokes equations and let $w\in L^\infty(0,T;H^{1/2})\cap L^2(0,T;H^{3/2})$ be the corresponding $H^{1/2}$-solution of \re{eqNSEW1_fixedU}. Then $v\coloneqq\PP w$ satisfies \re{eqFixedU_Wweak}, for all $t\in[0,T)$ and all $\phi\in C^\infty_c([0,T)\x\TT^3)$ such that $\nabla\cdot\phi=0$. Note that the requirement that test functions be divergence free means that $v$ is not necessarily an $H^{1/2}$-solution of \re{eqNSEW1_fixedU}.

To see that $v$ satisfies \re{eqFixedU_Wweak}, suppose that $u$ and $w$ are smooth. We can then write $v=w-\nabla q$ and treat the nonlinear terms as follows: 
\eqnbsa
((u\cdot\nabla)w + (\nabla u)^\top w,\phi) = ((u\cdot\nabla)v + (\nabla u)^\top v + (u\cdot\nabla)\nabla q + (\nabla u)^\top\nabla q,\phi)\\
= ((u\cdot\nabla)v + (\nabla u)^\top v,\phi)\,.
\eqnesa A simple density argument shows that this also holds for $w\in H^1$. 

Now one can prove that $H^{1/2}$-solutions of \re{eqNSEW1_fixedU} are unique when test functions are taken to be divergence free, just as we have uniqueness in Proposition \ref{propNSEW1_fixedU}. Furthermore, we deduce that $u$ is such a solution, from the hypothesis that $u$ is a weak solution of the Navier--Stokes equations (with the specified regularity). Indeed, using the substitution $w=u$, the nonlinear terms in  \re{eqFixedU_Wweak} become
\[
((u\cdot\nabla)u + \tfrac{1}{2}\nabla|u|^2,\phi)_{L^2}=((u\cdot\nabla)u,\phi)_{L^2},
\]  
as $\nabla\cdot\phi=0$. Since $v_0=\PP w_0 = u_0$, it follows from uniqueness that $u=v=\PP w$ as claimed. 
\end{proof}
%############################################
\section{Global well-posedness for a model system}\label{secBurgersVariant}

So far, we have considered the magnetization variables in the Navier--Stokes equations and proved the equivalence of the formulations for sufficiently regular weak solutions. Due to this equivalence, we do not expect that revisiting this reformulation will quickly yield new information about the Navier--Stokes equations. However, in this section we prove our main result (Theorem \ref{thmH1/2}) that gives global well-posedness for a slight modification of the reformulation.

Recall that the equations satisfied by the magnetization variables are
\[
\p_tw + ((\PP w)\cdot\nabla)w +(\nabla\PP w)^\top w -\Lap w=0.
\]
We will consider the following simplification, obtained by replacing $\PP w$ with $w$ in the second nonlinear term:
\eqnbl{eqBurgersW1}
\p_tw+((\PP w)\cdot\nabla)w + \frac{1}{2}\nabla |w|^2 - \Lap w = 0.
\eqne
These resemble the 3D viscous Burgers equations,
 which are a classical variant of the Navier--Stokes equations obtained by removing the pressure term and the incompressibility constraint:
\eqnbl{eqBurgers}
\p_tw+(w\cdot\nabla)w - \Lap w = 0.
\eqne

In \cite{P_JCR_Burgers_2015}, we showed that on the torus $\TT^3$ and for initial data in $w_0\in H^{1/2}(\TT^3)$ equation \re{eqBurgers} admits a unique strong global solution which is classical for $t>0$. We will show that similar methods apply to \re{eqBurgersW1}, which are closer to the Navier--Stokes equations in the sense that the nonlinear terms would have to be altered more significantly to obtain the Burgers equations. Moreover we will see that unlike solutions of the Burgers equations, solutions of \re{eqBurgersW1} have constant momentum -- a property shared with solutions of the Navier--Stokes equations.

We divide the proof of well-posedness for \re{eqBurgersW1} into two parts: first we prove the global well-posedness of weak solutions  for initial data $w_0\in H^1$; then show a local well-posedness result for $w_0\in H^{1/2}$ that combines with the $H^1$ result to give global well-posedness in this case. The local existence result in $H^{1/2}$ is based on the approach described in \cite{Marin-Rubio_JCR_Sad_2013}, for the Navier--Stokes equations. 

The main estimates we use to deduce global well-posedness arise from the following maximum principle for classical solutions of \re{eqBurgersW1} that we have adapted from \cite{Kiselev_Ladyzhenskaya}.
\begin{lemma}\label{lemMP}
If $w$ is a classical solution of \re{eqBurgersW1} on a time interval $[a,b]$ then
\eqnbl{lemMP:bound1}
\sup_{t\in[a,b]}\|w(t)\|_{L^\infty}\leq \|w(a)\|_{L^\infty}.
\eqne
\end{lemma}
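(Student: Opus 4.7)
The plan is to derive a scalar parabolic inequality for the quantity $\rho \coloneqq |w|^2$ and then invoke the standard maximum principle for linear parabolic equations on the compact manifold $\TT^3$. The key observation is that both nonlinear terms interact favourably with $w$: taking the dot product of \re{eqBurgersW1} with $w$ and using the identities
\eqnbsa
((\PP w)\cdot\nabla)w \cdot w &= \tfrac{1}{2}((\PP w)\cdot\nabla)|w|^2,\\
\tfrac{1}{2}\nabla|w|^2 \cdot w &= \tfrac{1}{2}(w\cdot\nabla)|w|^2,\\
w\cdot\Lap w &= \tfrac{1}{2}\Lap|w|^2 - |\nabla w|^2,
\eqnesa
yields the pointwise identity
\eqnbs
\tfrac{1}{2}\p_t\rho + \tfrac{1}{2}\bigl((\PP w + w)\cdot\nabla\bigr)\rho - \tfrac{1}{2}\Lap\rho + |\nabla w|^2 = 0.
\eqnes
In particular, with drift $b \coloneqq \PP w + w$ (which is bounded on $[a,b]\x\TT^3$ since $w$ is classical), $\rho$ is a subsolution of the linear parabolic equation $\p_t v + b\cdot\nabla v - \Lap v = 0$, namely $\p_t\rho + b\cdot\nabla\rho - \Lap\rho \leq 0$.

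Next I would apply the weak maximum principle to this scalar subsolution. Since $\rho$ is continuous on the compact set $[a,b]\x\TT^3$ and $\TT^3$ has no boundary, the maximum of $\rho$ is either attained at $t=a$ or at some interior point $(x^*,t^*)$ with $t^*>a$. In the latter case the standard calculus conditions $\p_t\rho(x^*,t^*)\geq 0$, $\nabla\rho(x^*,t^*)=0$, and $\Lap\rho(x^*,t^*)\leq 0$ force every term of the parabolic inequality to vanish at $(x^*,t^*)$; the usual trick of working with $\rho - \varepsilon t$ and sending $\varepsilon\to 0^+$ upgrades this to the conclusion $\sup_{[a,b]\x\TT^3}\rho \leq \max_{\TT^3}\rho(\cdot,a)$. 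Taking square roots gives \re{lemMP:bound1}.

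The main conceptual point, rather than a technical obstacle, is recognizing that the gradient nonlinearity $\tfrac{1}{2}\nabla|w|^2$ behaves after contraction with $w$ exactly like an additional transport term with velocity $w$, while the first nonlinearity contributes transport with velocity $\PP w$; neither produces a zeroth-order term that could destroy the maximum principle (which would occur if the term $(\nabla \PP w)^\top w$ of the original magnetization equation had been retained, since its dot product with $w$ equals $\tfrac{1}{2}\PP w\cdot\nabla|w|^2$ only when $\nabla\cdot\PP w = 0$ is exploited alongside an integration by parts, and in any case no pointwise sign can be extracted). Given the assumed classical regularity of $w$, all manipulations above are justified pointwise, so no approximation argument is needed.
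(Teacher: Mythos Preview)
Your proof is correct and follows essentially the same route as the paper: both derive a scalar parabolic transport--diffusion equation for $|w|^2$ (the paper for $|\e^{-\alpha t}w|^2$) with drift $\PP w + w$ and a good sign on the remainder $|\nabla w|^2$, then apply the maximum principle on $[a,b]\times\TT^3$. The only cosmetic difference is the strictness device: the paper introduces the exponential weight $\e^{-\alpha t}$ and sends $\alpha\downarrow 0$, while you use the equivalent $\rho-\varepsilon t$ trick.
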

\begin{proof}
Fix $\alpha>0$ and let $v(t,x)\coloneqq \e^{-\alpha t}w(x,t)$ for all $x\in\TT^3$. Then $|v|^2$ satisfies the equation
\eqnbl{lemMP:vEqn}
\p_t |v|^2 + 2\alpha |v|^2 + \nabla |v|^2 w +(\PP w)\cdot\nabla |v|^2 - 2 v\cdot\Lap v=0 .
\eqne
Since $2v\cdot\Lap v = \Lap |v|^2 - 2|\nabla v|^2$ we see that if $(x,t)\in(a,b]\x \TT^3$ is a local maximum of $|v|^2$, then the left-hand side of \re{lemMP:vEqn} is positive unless $|v(x,t)|=0$. Hence
\[ 
\|w(t)\|_{L^\infty}\leq \e^{\alpha t}\|w(a)\|_{L^\infty}.
\]
Now \re{lemMP:bound1} follows because $\alpha>0$ was arbitrary. 
\end{proof}

As we saw in the previous section, solutions of \re{eqNSEW1} for fixed $u$, do not necessarily have constant momentum (a similar technicality occurs for the Burgers equations, see \cite{P_JCR_Burgers_2015}). However, in the case of \re{eqBurgersW1}, like the Navier--Stokes equations, initial data with zero average gives rise to solutions that also have this property for positive times. To see this formally, we integrate \re{eqBurgersW1} over $\TT^3$:
\[
\frac{\d }{\d t} \int_{\TT^3} w\  \d x= -\int_{\TT^3}((\PP w)\cdot\nabla)w + \frac{1}{2}\nabla|w|^2 - \Lap w\,\d x =0
\]  
where the first term on the right-hand side vanishes because $\PP w$ is weakly divergence free and the other terms vanish by periodicity. For this reason, in what follows, we will prove well-posedness for solutions in certain homogeneous Sobolev spaces $\dot H^s(\TT^3)$.

As in the previous section we will at first consider a weak formulation of \re{eqBurgersW1}. We call $w\in L^\infty (0,T;L^2)\cap L^2(0,T;\dot H^1)$ a weak solution of \re{eqBurgersW1} with initial data $w_0\in L^2$ if 
\begin{multline}\label{eqBurgersW1Wk}
\int_0^t(((\PP w)\cdot\nabla)w(s)+ (\nabla w)^\top w,\phi(s))_{L^2}+(\nabla w(s),\nabla\phi(s))_{L^2}\,\d s\\
 =(w_0,\phi(0))_{L^2}-(w(t),\phi(t))_{L^2}+\int_0^t(w(s),\p_t\phi(s))_{L^2}\,\d s
\end{multline}
for all $\phi\in C^\infty_c([0,T)\x\TT^3)$ and all $t\in[0,T)$. 

We have not been able to find weak solutions of \re{eqBurgersW1} directly, as we would for the Navier--Stokes equations. Indeed, the second nonlinear term does not seem amenable to the necessary energy estimates if we only have $w_0\in L^2$. However for $w_0\in \dot H^{1/2}$ we will show that there exists a unique weak solution with the additional regularity $w\in L^\infty(0,T;\dot H^{1/2})\cap L^2(0,T;\dot H^{3/2})$ for some $T>0$. We call solutions that are at least this regular $H^{1/2}$-solutions. Moreover, we will show that the solutions become smooth, immediately after the initial time, and can be extended to solutions on $[0,\infty)$.  

Before recalling the main statement and beginning the proof we define the Galerkin approximations that will be used in this section. For fixed $w_0\in \dot H^{1/2}$ we denote by $w_n\in C^\infty([0,T_n]\x\TT^3)$ the solution of the truncated equation
\eqnbl{eqBurgersW1Galerkin}
\p_tw_n +P_n\left[((\PP w_n)\cdot\nabla)w_n + \frac{1}{2}\nabla |w_n|^2\right] - \Lap w_n = 0
\eqne  
with initial data $P_n w_0$. Here $T_n>0$ is the maximal existence time for the solution $w_n$, of this system of quadratic ODEs. Here $P_n$ denotes a Fourier truncation:
\[
P_n\left(\frac{1}{(2\pi)^{3/2}}\sum_{k\in\ZZ^3}\hat f(k)\e^{\irm x\cdot k}\right) = \frac{1}{(2\pi)^{3/2}}\sum_{|k|\leq n}\hat f(k)\e^{\irm x\cdot k}.
\]

%counter jigging
\setcounter{TempCounter}{\value{ThmCount}}
\setcounter{ThmCount}{\value{MainThmIndex}}
\begin{theorem}%\label{thmH1/2}
Given $w_0\in \dot H^{1/2}(\TT^3)$ there exists a unique global $H^{1/2}$-solution of \re{eqBurgersW1} $w\in C([0,\infty);\dot H^{1/2})\cap L^2(0,\infty;\dot H^{3/2})$ such that $w(0,x)=w_0(x)$ for all $x\in\TT^3$. Moreover $w\in C^1((0,\infty);C(\TT^3))\cap C((0,\infty);C^2(\TT^3))$ is a classical solution, except at time $t=0$.
\end{theorem}
%counter jigging
\setcounter{ThmCount}{\value{TempCounter}}

This result is a consequence of the following two theorems.
\begin{theorem}\label{thmH1}
If $w_0\in \dot H^1(\TT^3)$ there exists a unique global solution of \re{eqBurgersW1} $w\in C([0,\infty);\dot H^1)\cap L^2(0,T;\dot H^{2})$ such that $w(0)=w_0$. Moreover $w$ is a classical solution, except possibly at time $t=0$.   
\end{theorem}
In the case of initial data in $\dot H^1$, we will obtain local well-posedness and smoothness in the same way as we can for the Navier--Stokes equations. Global well-posedness then follows, using estimates based on the maximum principle (Lemma \ref{lemMP}).

\begin{theorem}\label{thmH1/2loc}
For any $w_0\in \dot H^{1/2}$ there exists a unique $H^{1/2}$-solution, $w$, of \re{eqBurgersW1} on $[0,T)$ for some $T>0$. In addition, $w\in C([0,T);H^{1/2})$.
\end{theorem}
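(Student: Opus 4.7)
The plan is to reduce to a perturbative (small-data) situation via a splitting argument in the spirit of \cite{Marin-Rubio_JCR_Sad_2013}, exploiting the global well-posedness at higher regularity (Theorem \ref{thmH1}). The core difficulty is that the natural $\dot H^{1/2}$ energy estimate for \re{eqBurgersW1} is critical: testing the equation with $\Lambda w$ and using H\"older's inequality with the Sobolev embeddings $H^1\hookrightarrow L^6$, $H^{1/2}\hookrightarrow L^3$ together with the interpolation $\|w\|_1\leq\|w\|_{1/2}^{1/2}\|w\|_{3/2}^{1/2}$ (valid since zero average is preserved by \re{eqBurgersW1}) yields
\eqnbs
\tfrac{1}{2}\tfrac{\d}{\d t}\|w\|_{1/2}^2 + \|w\|_{3/2}^2 \leq c_*\|w\|_{1/2}\|w\|_{3/2}^2,
\eqnes
so the dissipation can only absorb the nonlinear term when $\|w\|_{1/2}<1/(2c_*)$. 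This is the main obstacle; bare Grönwall fails at this regularity for arbitrary data.

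Set $\eta\coloneqq 1/(4c_*)$ and, using that $\|(I-P_N)w_0\|_{1/2}\to 0$ as $N\to\infty$, choose $N$ with $\|\psi_0\|_{1/2}<\eta$, where $\psi_0\coloneqq(I-P_N)w_0$. The complementary piece $\phi_0\coloneqq P_Nw_0$ is a trigonometric polynomial and so lies in $\dot H^1$; Theorem \ref{thmH1} therefore supplies a global solution $\phi\in C^0([0,\infty);\dot H^1)\cap L^2_{\loc}(0,\infty;\dot H^2)$ of \re{eqBurgersW1} with $\phi(0)=\phi_0$, and by interpolation $\phi\in L^4_{\loc}(0,\infty;\dot H^{3/2})$. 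Seek $w$ in the form $w=\phi+\psi$; a direct computation shows that $\psi$ must satisfy
\eqnbs
\psi_t + (\PP\phi\cdot\nabla)\psi + (\PP\psi\cdot\nabla)\phi + (\PP\psi\cdot\nabla)\psi + \nabla(\phi\cdot\psi) + \tfrac{1}{2}\nabla|\psi|^2 - \Lap\psi = 0
\eqnes
with $\psi(0)=\psi_0$, an equation of the same type as \re{eqBurgersW1} plus lower-order terms linear in $\psi$ whose coefficients depend on the known function $\phi$.

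Approximate $\psi$ by Galerkin truncations $\psi_n$ analogous to \re{eqBurgersW1Galerkin}. Testing against $\Lambda\psi_n$, the cubic terms purely in $\psi_n$ contribute at most $c_*\|\psi_n\|_{1/2}\|\psi_n\|_{3/2}^2$ as in the critical estimate, while each mixed term is bounded by H\"older/Sobolev and Young's inequality by
\eqnbs
\tfrac{1}{4}\|\psi_n\|_{3/2}^2 + C\bigl(\|\phi\|_{H^1}^4+\|\phi\|_{H^{3/2}}^2\bigr)\|\psi_n\|_{1/2}^2.
\eqnes
The resulting differential inequality
\eqnbs
\tfrac{\d}{\d t}\|\psi_n\|_{1/2}^2 + \bigl(1-2c_*\|\psi_n\|_{1/2}\bigr)\|\psi_n\|_{3/2}^2 \leq C\bigl(1+\|\phi\|_{H^{3/2}}^2+\|\phi\|_{H^1}^4\bigr)\|\psi_n\|_{1/2}^2,
\eqnes
together with $\|\psi_n(0)\|_{1/2}\leq\|\psi_0\|_{1/2}<\eta<1/(2c_*)$ and the $L^1_{\loc}$ integrability of the right-hand side (from the regularity of $\phi$), yields by Grönwall and a continuation argument a time $T>0$ independent of $n$ on which $\|\psi_n\|_{1/2}<1/(2c_*)$, with a uniform bound in $L^2(0,T;\dot H^{3/2})$. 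The Aubin--Lions lemma then produces a limit $\psi\in L^\infty(0,T;\dot H^{1/2})\cap L^2(0,T;\dot H^{3/2})$ solving the perturbation equation in the weak sense, whence $w\coloneqq\phi+\psi$ is the claimed strong solution on $[0,T)$. For uniqueness, given two strong solutions $w^{(1)},w^{(2)}$, form $z=w^{(1)}-w^{(2)}$, which satisfies an equation whose nonlinearities can be rearranged in the same way and estimated at the $\dot H^{1/2}$ level exactly as above, using that each $w^{(i)}\in L^2(0,T;\dot H^{3/2})$; a standard Grönwall/iteration argument (cf.~the treatment after \re{eqFixedUUniqEstFormal}) then forces $z\equiv 0$.
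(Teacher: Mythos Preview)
Your argument is correct, but the splitting you use is different from the one in the paper. The paper decomposes each Galerkin approximation as $w_n=v_n+z_n$, where $v_n$ solves the \emph{heat equation} with data $P_nw_0$ and $z_n$ carries the full nonlinearity with zero initial data; the key smallness comes from $\int_0^t\|v_n\|_1^4\,\d s\to0$ as $t\downarrow0$, and the resulting integral inequality is closed via Lemma~\ref{lemNotCircular}. You instead split the \emph{initial data} as $w_0=P_Nw_0+(I-P_N)w_0$, push the smooth low-frequency part through the full nonlinear flow using Theorem~\ref{thmH1}, and treat the high-frequency remainder as a small-data perturbation around~$\phi$. Both are in the Calder\'on spirit, but the mechanics differ: the paper's route is self-contained (it does not invoke Theorem~\ref{thmH1} for existence) and packages the bootstrap into Lemma~\ref{lemNotCircular}, whereas your route is more modular---reusing the already-established $\dot H^1$ theory---and replaces the quadratic-in-$\int g_n$ inequality by a straightforward Gr\"onwall estimate with a smallness barrier on $\|\psi_n\|_{1/2}$. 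For uniqueness, note that the paper has already isolated this as Lemma~\ref{lemBurgersW1Uniq}; your sketch reproduces that argument.
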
   
The proof of this follows the method of \cite{Calderon_1990} (See also \cite{Chemin_book_2006} or \cite{Marin-Rubio_JCR_Sad_2013} for expositions) in which we decompose the equations into a heat part and a nonlinear part with vanishing initial data.
\subsection{Proof of Theorem \ref{thmH1}}
First, note that if $w_n$ satisfies \re{eqBurgersW1Galerkin} then, by the arguments above, 
\[
\int_{\TT^3}w_n(x,t)\,\d x =\int_{\TT^3}P_n w_0 (x)\,\d x=0.
\] 
Integrating \re{eqBurgersW1Galerkin} against $2\Lambda^2 w_n$, and proceeding as for strong solutions of the Navier--Stokes equations (see \cite{JCR_NSE_book}, for example), yields
\eqnbl{eqBurgersW1H1Est1}
\frac{\d }{\d t}\|w_n\|_{1}^2 + \|w_n(t)\|_{2}^2 \leq c\|w_n(t)\|_{1}^6 
\eqne
for all $t\in[0,T_n]$ and some $c>0$. Considering only the terms in $\|w_n\|_1^2$ and solving the resulting differential inequality, we obtain
\[
\|w_n(t)\|_1^2\leq\frac{\|P_nw_0\|_1^2}{\sqrt{1-2ct\|P_nw_0\|_1^4}}.
\] 
Fixing $T<(2c\|w_0\|_1^4)^{-1}$, it follows from maximality of $T_n$ that $T_n>T$ and $\|w_n(t)\|_1$ is bounded, independent of $n$, on $[0,T)$. From \re{eqBurgersW1H1Est1}, it then follows that $w_n$ is uniformly bounded in $L^2(0,T;\dot H^{2})$.

Using these uniform bounds we have the following bounds on the nonlinear terms from \re{eqBurgersW1Galerkin} in $L^2(0,T;L^2)$:
\eqnbsa
\left(\int_{\TT^3}|(\nabla w_n)^\top w_n|^2\right)^{1/2}&\leq \left(\int_{\TT^3} |\nabla w_n|^3\right)^{1/3}\left(\int_{\TT^3}|w_n|^6\right)^{1/6}\\
&\leq c\|w_n\|_{H^{3/2}}\|w_n\|_{H^1}\in L^2(0,T).
\eqnesa
A similar estimate holds for the other nonlinear term, hence $\p_tw_n$ is uniformly bounded in $L^2(0,T;L^2)$. By the Aubin--Lions lemma, there exists a subsequence relabelled $w_n\to w\in L^2(0,T;\dot H^1)$ such that $w$ is a weak solution of \re{eqBurgersW1Wk} and also $\p_tw\in L^2(0,T;L^2)$, $w\in C([0,T);\dot H^1)\cap L^2(0,T;\dot H^2)$. 

To prove that $w$ is a classical solution after the initial time, we have the following lemma. We omit the proof because it is very similar to arguments applicable to the Navier--Stokes equations which are described in \cite{ConstFoias} and \cite{JCR_2006}, for example.

\begin{lemma}\label{lemBootstrap}
If the approximations $w_n$ are uniformly bounded in $L^2(\varepsilon,T;\dot H^{s})$ for $s>3/2$ and some $\varepsilon\geq 0$ such that $\|w_n(\varepsilon)\|_{s}<\infty$, then they are also bounded uniformly in $L^\infty(\varepsilon,T;\dot H^s)\cap L^2(\varepsilon,T;\dot H^{s+1})$.
\end{lemma}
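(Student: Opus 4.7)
The plan is to obtain the improved bounds by pairing the Galerkin equation \re{eqBurgersW1Galerkin} with $\Lambda^{2s}w_n$ in $L^2$ and then closing a differential inequality for $\|w_n\|_s^2$ via Gr\"onwall, using the hypothesis that $\|w_n\|_s^2 \in L^1(\varepsilon,T)$ uniformly in $n$. Exploiting the self-adjointness of $P_n$ and $P_n w_n = w_n$, this pairing yields
\eqnbs
\tfrac12\tfrac{\d}{\d t}\|w_n\|_s^2 + \|w_n\|_{s+1}^2 = -\bigl((\PP w_n\cdot\nabla)w_n,\Lambda^{2s}w_n\bigr)_{L^2} - \tfrac12\bigl(\nabla|w_n|^2,\Lambda^{2s}w_n\bigr)_{L^2}.
\eqnes

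For the nonlinear terms I would exploit that $s>3/2$ gives $H^s\hookrightarrow L^\infty$. For the convective term I would use the divergence-freeness of $\PP w_n$ via the commutator decomposition
\eqnbs
\bigl(\Lambda^s[(\PP w_n\cdot\nabla)w_n],\Lambda^s w_n\bigr) = \bigl((\PP w_n\cdot\nabla)\Lambda^s w_n,\Lambda^s w_n\bigr) + \bigl([\Lambda^s,\PP w_n\cdot\nabla]w_n,\Lambda^s w_n\bigr);
\eqnes
the first piece vanishes by integration by parts, and the second is bounded via a Kato--Ponce commutator estimate. For the gradient term I would integrate by parts to move a derivative from $\nabla|w_n|^2$ onto $w_n$ and then apply the product estimate $\|fg\|_{H^s}\leq c(\|f\|_{L^\infty}\|g\|_{H^s}+\|f\|_{H^s}\|g\|_{L^\infty})$. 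In each case, after applying $H^s\hookrightarrow L^\infty$ and the equivalence $\|\cdot\|_s\sim\|\cdot\|_{H^s}$ on $\dot H^s$, every nonlinear contribution can be bounded by $c\|w_n\|_s^2\|w_n\|_{s+1}$. Young's inequality then absorbs the factor $\|w_n\|_{s+1}$ into the dissipation, leaving
\eqnbs
\tfrac{\d}{\d t}\|w_n\|_s^2 + \tfrac12\|w_n\|_{s+1}^2 \leq c\,\|w_n\|_s^4.
\eqnes

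Setting $\phi(t)\coloneqq\|w_n(t)\|_s^2$, this reads $\phi'(t)\leq c\,\phi(t)\cdot\phi(t)$, and the hypothesis provides a uniform-in-$n$ bound on $\int_\varepsilon^T\phi(\tau)\,\d\tau$. Gr\"onwall's inequality therefore gives $\phi(t)\leq\phi(\varepsilon)\exp\bigl(c\int_\varepsilon^T\phi\bigr)$ uniformly in $n$, proving the $L^\infty(\varepsilon,T;\dot H^s)$ bound; integrating the differential inequality over $[\varepsilon,T]$ then produces the $L^2(\varepsilon,T;\dot H^{s+1})$ bound. The main obstacle I anticipate is the gradient term $\tfrac12\nabla|w_n|^2$: unlike in the Navier--Stokes setting, $w_n$ itself is not divergence-free, so no cancellation is available and one must absorb the full $L^\infty$ norm of $w_n$ via the product estimate. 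This is precisely the step that forces the hypothesis $s>3/2$.
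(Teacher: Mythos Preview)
The paper actually omits its own proof of this lemma, pointing instead to the standard bootstrap arguments for the Navier--Stokes equations in Constantin--Foias and Robinson. Your approach --- pairing \re{eqBurgersW1Galerkin} with $\Lambda^{2s}w_n$, bounding both nonlinear terms by $c\|w_n\|_s^2\|w_n\|_{s+1}$ via product/commutator estimates and $H^s\hookrightarrow L^\infty$, absorbing with Young, and then closing by Gr\"onwall using the uniform $L^1$-in-time bound on $\|w_n\|_s^2$ --- is precisely that standard argument and is correct.
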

Applying this lemma five times, we see that $(w_n)_{n=1}^\infty$ is a bounded sequence in $L^\infty(\varepsilon,T;\dot H^6)$ for all $\varepsilon\in(0,T)$.  Using the Banach algebra property of $H^s$ for $s>3/2$, this gives us the following estimates on the time derivatives of $w_n$:
\[
\sup_{t\in(\varepsilon,T)}\left\|\p_t w_n(t)\right\|_4\leq c \sup_{t\in(\varepsilon,T)}(\|w_n(t)\|_4\|w_n(t)\|_5 + \|w_n(t)\|_6)
\]
and (differentiating \re{eqBurgersW1Galerkin})
\eqnbsa
\sup_{t\in(\varepsilon,T)}\left\|\p_t^2 w_n(t)\right\|_2\leq c\sup_{t\in(\varepsilon,T)}\left(\left\|\p_t w_n(t)\right\|_4 + \left\|\p_t w_n(t)\right\|_2\|w_n(t)\|_3\right.\\
+\left.\left\|\p_t w_n(t)\right\|_3\|w_n(t)\|_2\right).
\eqnesa
Therefore $w_n$ is uniformly bounded in $H^2(\varepsilon,T;\dot H^2)\cap H^1(\varepsilon,T;\dot H^4)$. This regularity passes to the limit; hence by Sobolev embeddings $w\in C^1(0,T;C(\TT^3))\cap C(0,T;C^2(\TT^3))$ is a classical solution on $[\varepsilon,T]$. Note that we may consider a closed interval by using the above argument on a larger open interval.   

Since $w$ is a classical solution we can apply Lemma \ref{lemMP} to obtain
\[
\sup_{t\in [\varepsilon,T]}\|w(t)\|_{L^\infty}\leq \|w(\varepsilon)\|_{L^\infty}.
\] 
This allows the following additional $H^1$ estimate:
\eqnbl{eqBurgersW1Linftyapp1}
\frac{\d}{\d t}\|w\|_1^2\leq |(2((\PP w)\cdot\nabla)w + \nabla |w|^2,-\Lap w)_{L^2}| - 2\|w\|_2^2\leq c\|w\|_{L^\infty}^2\|w\|_1^2. 
\eqne
Notice that care must be taken with the first nonlinear term because $\PP w$ is an unbounded operator on $L^\infty$. We therefore argue using the anti-symmetry, 
\[(\PP w\cdot\nabla v_1,v_2)_{L^2}=-(\PP w\cdot\nabla v_2,v_1)_{L^2},\] as follows:
\begin{multline*}
((\PP w\cdot\nabla)w,-\p_{xx} w)_{L^2} =(\p_x[(\PP w\cdot\nabla)w],\p_x w)_{L^2} \\
= ((\PP\p_x w\cdot\nabla)w,\p_x w)_{L^2} + ((\PP w\cdot\nabla \p_x w),\p_x w)_{L^2}
= -((\PP\p_x w\cdot\nabla)\p_x w, w)_{L^2},
\end{multline*}
for any spatial derivative $\p_x$. Hence the inequality
\[
|((\PP w)\cdot\nabla)w,-\Lap w)_{L^2}|\leq \|w\|_1\|w\|_2\|w\|_{L^\infty},
\]
holds, in the absence of $L^\infty$ bounds on $\PP w$.

From \re{eqBurgersW1Linftyapp1} and Lemma \ref{lemMP}, it follows that for all $t\in[0,T)$
\[
\|w(t)\|_{1}^2\leq \|w_0\|^2_1\e^{ct\|w_0\|_{L^\infty}^2}.
\] 
This rules out the finite-time blowup of $\|w(t)\|_1$, therefore since we can extend a solution on $[0,T)$ onto $[0,T+\delta)$ where $\delta \propto \|w(T)\|_1^{-4}$, there exists a solution $w\in C^1([0,\infty);C(\TT^3))\cap C([0,\infty);C^2(\TT^3)) $.   

We have now proved that for initial data in $\dot H^1$ there exists a global weak solution to \re{eqBurgersW1} that is classical, except possibly at the initial time. To complete the proof of Theorem \ref{thmH1} it remains to show that these solutions are unique. The following lemma also shows that even less regular solutions are unique and will be useful in the next section.
\begin{lemma}\label{lemBurgersW1Uniq}
If $w_1$, $w_2\in L^\infty(0,T;\dot H^{1/2})\cap L^2(0,T;\dot H^{3/2})$ are $H^{1/2}$-solutions of \re{eqBurgersW1Wk} corresponding to the same initial data $w_0\in \dot H^{1/2}$ then $w_1=w_2$. 
\end{lemma}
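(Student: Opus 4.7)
The plan is to carry out a standard difference estimate at the level of $\dot H^{1/2}$. Setting $w\coloneqq w_1-w_2$, I would first subtract the equations satisfied by $w_1$ and $w_2$ to obtain
\[
w_t+(\PP w_1\cdot\nabla)w+(\PP w\cdot\nabla)w_2+\tfrac{1}{2}\nabla\bigl((w_1+w_2)\cdot w\bigr)-\Lap w=0,
\]
which is linear in $w$ with zero initial data (the time-integrated form follows from \re{eqBurgersW1Wk}). Formally testing with $\Lambda w$, which is admissible since $w\in L^2(0,T;\dot H^{3/2})$ and $w$ has zero mean, yields
\[
\tfrac{1}{2}\tfrac{\d}{\d t}\|w\|_{1/2}^2+\|w\|_{3/2}^2=-I_1-I_2-I_3,
\]
where $I_1$, $I_2$, $I_3$ are the $L^2$-pairings of the three nonlinear terms against $\Lambda w$.

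Next, I would estimate each nonlinear term so that the $\|w\|_{3/2}^2$ contribution can be absorbed into the left-hand side. For $I_1$, H\"older with exponents $(6,3,2)$, the Sobolev embeddings $H^1\hookrightarrow L^6$ and $H^{1/2}\hookrightarrow L^3$, and the interpolation $\|w\|_1\leq\|w\|_{1/2}^{1/2}\|w\|_{3/2}^{1/2}$, followed by Young's inequality, give
\[
|I_1|\leq\tfrac{1}{8}\|w\|_{3/2}^2+c\|w_1\|_{H^1}^4\|w\|_{1/2}^2,
\]
and since $\|w_1\|_{H^1}^4\leq c\|w_1\|_{H^{1/2}}^2\|w_1\|_{H^{3/2}}^2$ by interpolation, the factor lies in $L^1(0,T)$. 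A parallel argument for $I_2$ yields
\[
|I_2|\leq\tfrac{1}{8}\|w\|_{3/2}^2+c\|w_2\|_{3/2}^2\|w\|_{1/2}^2,
\]
again with an $L^1$ factor in time.

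For $I_3$, which is where the simplified second nonlinearity requires extra care, I would use the duality bound $|(\nabla f,\Lambda w)_{L^2}|\leq c\|f\|_{1/2}\|w\|_{3/2}$ combined with a Kato--Ponce fractional Leibniz estimate
\[
\|fg\|_{H^{1/2}}\leq c\bigl(\|f\|_{H^{1/2}}\|g\|_{H^{3/2}}+\|f\|_{H^1}\|g\|_{H^1}\bigr)
\]
applied with $f=w$ and $g=w_1+w_2$. After interpolation and Young's inequality this gives
\[
|I_3|\leq\tfrac{1}{4}\|w\|_{3/2}^2+c\bigl(\|w_1+w_2\|_{3/2}^2+\|w_1+w_2\|_{H^{1/2}}^2\|w_1+w_2\|_{3/2}^2\bigr)\|w\|_{1/2}^2,
\]
and the bracketed factor is once more in $L^1(0,T)$ by the assumed regularity.

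Combining the three bounds produces a differential inequality
\[
\tfrac{\d}{\d t}\|w\|_{1/2}^2\leq h(t)\,\|w\|_{1/2}^2,\qquad h\in L^1(0,T),
\]
so Gr\"onwall's lemma and $w(0)=0$ force $w\equiv 0$, i.e.\ $w_1=w_2$. The formal test against $\Lambda w$ would be rigorously justified by approximating $w$ by smooth functions in $L^2(0,T;\dot H^{3/2})$ with time derivatives converging in $L^2(0,T;\dot H^{-1/2})$, and then passing to the limit in \re{eqBurgersW1Wk}, exactly as in the uniqueness step of the proof of Proposition \ref{propNSEW1_fixedU}. The main obstacle is the control of $I_3$: because $\PP$ has been removed from the second nonlinear term, divergence-freeness is unavailable there, so a fractional product estimate is genuinely required, reflecting the fact that $\dot H^{1/2}(\TT^3)$ is at the critical scaling for this problem.
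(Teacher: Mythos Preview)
Your argument is correct and follows the same overall strategy as the paper: form the difference equation, test (via approximation) against $\Lambda(w_1-w_2)$, absorb the $\|w_1-w_2\|_{3/2}^2$ contribution, and apply Gr\"onwall. The paper's treatment of the first two nonlinear pieces is essentially identical to your handling of $I_1$ and $I_2$.

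The one genuine difference is in the treatment of $I_3$. You invoke a Kato--Ponce product estimate to bound $\|(w_1+w_2)\cdot w\|_{H^{1/2}}$, which works but is heavier machinery than needed. The paper instead rewrites $\tfrac{1}{2}\nabla|w_i|^2=(\nabla w_i)^\top w_i$, decomposes the difference as $(\nabla w_1)^\top w+(\nabla w)^\top w_2$ (or the symmetric variant), and then estimates each piece directly by H\"older with exponents $(3,3,3)$ or $(2,6,3)$ together with the embeddings $H^{1/2}\hookrightarrow L^3$, $H^1\hookrightarrow L^6$; for instance
\[
|((\nabla w_1)^\top w,\Lambda\psi_n)|\leq c\|w_1\|_{3/2}\|w\|_{1/2}\|\psi_n\|_{3/2},\qquad
|((\nabla w)^\top w_2,\Lambda\psi_n)|\leq c\|w_2\|_{1}\|w\|_{1}\|\psi_n\|_{3/2}.
\]
This yields the same $L^1(0,T)$ Gr\"onwall factor you obtain, but via purely elementary inequalities. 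So your closing remark that ``a fractional product estimate is genuinely required'' overstates the difficulty: the paper shows that standard H\"older--Sobolev bounds suffice once one avoids moving the gradient onto the test function. Otherwise the proofs coincide.
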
  
\begin{proof}
Let $(\psi_n)_{n=1}^\infty$ be a sequence of spatially-periodic test functions in $\psi_n\in C_c^\infty([0,T)\x\TT^3)$ such that $\int_{\TT^3}\psi_n(t)=0$ for all $t\in[0,T)$ and $\psi_n\to w_1-w_2$ in $L^2(0,T;\dot H^{3/2})$. Set $\phi_n \coloneqq \Lambda^1\psi_n$ in \re{eqBurgersW1Wk}, then the difference $w_1-w_2$ satisfies
\eqnbsa
(\Lambda^{1/2}(w_1-w_2),\Lambda^{1/2}\psi_n)_{L^2}+\int_0^t(\Lambda^{1/2}\nabla(w_1-w_2),\Lambda^{1/2}\nabla \psi_n)\\
-\int_0^t(\Lambda^{1/2}(w_1-w_2)(s), \p_t\Lambda^{1/2}\psi_n(s))\,\d s\\
\leq c\int_0^t(\|w_1-w_2\|_{1/2}\|w_1\|_{3/2} +\|w_2\|_{1}\|w_1-w_2\|_1)\|\psi_n\|_{3/2}\,\d s\\
+ c\int_0^t(\|w_1-w_2\|_{1}\|w_1\|_{1} +\|w_2\|_{3/2}\|w_1-w_2\|_{1/2})\|\psi_n\|_{3/2}\,\d s,
\eqnesa
for all $t\in[0,T)$ and every $n$. 
 Hence, letting $n\to\infty$, and applying Young's inequality in the usual way, we see that 
\[
\|w_1-w_2(t)\|_{1/2}^2\leq c\int_0^t(\|w_1\|_{3/2}^2+\|w_2\|_{3/2}^2 + \|w_1\|_1^4+ \|w_2\|_1^4)\|w_1-w_2\|_{1/2}^2\,\d s 
\]
for almost all $t\in[0,T)$. Since the parenthesised part of the integral is in $L^1(0,T)$, Gronwall's Lemma now implies that $\|w_1-w_2(t)\|_{1/2}=0$ for all $t\in[0,T)$. 
 \end{proof}
\subsection{Proof of Theorem \ref{thmH1/2loc}}
In this section we prove the local well-posedness of \re{eqBurgersW1} with initial data $w_0\in\dot H^{1/2}$. Uniqueness follows from Lemma \ref{lemBurgersW1Uniq}, so it suffices to prove local existence of $H^{1/2}$-solutions.

Following \cite{Calderon_1990}, \cite{Marin-Rubio_JCR_Sad_2013}, and \cite{P_JCR_Burgers_2015}, we find the necessary estimates by decomposing the Galerkin approximations $w_n$, which solve \re{eqBurgersW1Galerkin}, into a sum $w_n=v_n+z_n$ where
\eqnbs
\left\{
\begin{aligned}
\p_tv_n-\Lap v_n =0\\
v_n(0)=P_nw_0
\end{aligned}
\right.
\eqnes
and
\eqnbl{eqBurgersW1Calderon1}
\left\{
\begin{aligned}
\p_tz_n-\Lap z_n = -P_n\left[((\PP w_n)\cdot\nabla) w_n + \frac{1}{2}\nabla |w_n|^2\right]\\
z_n(0)=0.
\end{aligned}
\right.
\eqne

From the heat equation satisfied by $v_n$, it is easy to check that for any $t\geq0$ and any $n$
\eqnbl{eqHEEstimate}
\|v_n(t)\|_{1/2}^2 +2\int_0^t\|v_n(s)\|^2_{3/2}\,\d s \leq \|P_n w_0\|^2_{1/2},
\eqne
hence $v_n\in L^\infty(0,T;H^{1/2})\cap L^2(0,T;H^{3/2})$ is uniformly bounded (independent of $n$ and $t$). It therefore suffices to find estimates on $z$ in the same spaces.

Integrating \re{eqBurgersW1Calderon1} against $\Lambda z_n$ yields
\eqnbsa
\frac{1}{2}\frac{\d}{\d t}\|z_n(t)\|_{1/2}^2 + \|z_n(t)\|_{3/2}^2&\leq|(((\PP w_n)\cdot\nabla) w_n+(\nabla w_n)^\top w_n,\Lambda z_n)_{L^2}|\\
&\leq c\|w_n(t)\|_{1}^2\|z_n(t)\|_{3/2}\\
&\leq  2c\|v_n\|_1^4 +\frac{1}{4}\|z_n\|^2_{3/2} + \|z_n\|^2_{3/2}\|z_n\|_{1/2}\\
&\leq 2c\|v_n\|_1^4 +\frac{3}{4}\|z_n\|^2_{3/2} + \frac{1}{2}\|z_n\|^2_{3/2}\|z_n\|_{1/2}^2.
\eqnesa

This can be re-arranged to give a differential inequality of the form 
\[
\frac{\d x}{\d t}+y\leq 2xy+\delta(t)
\]
where $x=\|z_n\|_{1/2}^2$, $y=\tfrac{1}{2}\|z_n\|_{3/2}^2$ and $\delta_n(t)=4c\|v_n(t)\|_1^4$. It follows (see Lemma 10.3 of \cite{JCR_NSE_book}) that $\|z_n\|_{L^\infty(0,T_n;H^{1/2})}\leq 1/4$ and $\|z_n\|_{L^2(0,T_n;H^{3/2})}\leq 1/2$ when
\eqnbl{eqTn}
\int_0^{T_n}\delta_n(t)\ \d t\leq 1/8.
\eqne
Now since $\|v_n(t)\|_1\leq \|v(t)\|_1$ for all $t\geq 0$, we can choose $T>0$ such that \re{eqTn} holds with $T_n=T$ for all $n$. For this $T$ we have uniform bounds on $z_n$ in $L^\infty(0,T;H^{1/2})$ and $L^2(0,T;H^{3/2})$.

It follows that $w_n$ is bounded in $L^\infty(0,T;H^{1/2})\cap L^2(0,T;H^{3/2})$ independent of $n$. A simple argument now yields bounds on $\tfrac{\p}{\p t} w_n\in L^2(0,T;H^{-1/2})$, independent of $n$. Therefore, by the Aubin--Lions lemma, passing to a subsequence we may assume that $w_n$ converges in $L^2(0,T;H^{1/2})$ to a limit $w$ that is an $H^{1/2}$-solution of \re{eqBurgersW1}. 

The fact that $w\in C([0,T);H^{1/2})$ follows from the embedding 
\[
\{f\in L^2(0,T;H^{3/2})\colon \p_t f\in L^2(0,T;H^{-1/2})\}\hookrightarrow C([0,T);H^{1/2}),
\]
see Chapter 7 of \cite{Roubicek_book}, for example. The definition of weak solutions ensures that the continuous representative attains the initial data.  This completes the proof of Theorem \ref{thmH1/2loc}.

 Since for all $\varepsilon>0$ there exists $t\in(0,\varepsilon)$ such that $w(t)\in \dot H^1$, Theorem \ref{thmH1} implies that $w$ is a classical solution on $(0,T)$ and can be extended to a classical solution on $(0,\infty)$. By Lemma \ref{lemBurgersW1Uniq} this solution is unique. Thus Theorem \ref{thmH1/2} is proved.  
%############################################
\section{Conclusions}

We have presented a new model system for the Navier--Stokes equations, obtained from the magnetization variables formulation by the omitting a Leray projector in one of the nonlinear terms. Like the Burgers equations, the new system has a maximum principle, and, additionally, like the Navier--Stokes equations exhibits conservation of momentum. Using these observations, we showed that the system admits global-in-time existence and uniqueness of solutions in $\dot H^{1/2}(\TT^3)$. 

As for the Navier--Stokes equations,  $\dot H^{1/2}(\TT^3)$ is a critical space with repect to the natural scaling of the model equations. However, it is not clear whether one can obtain existence results for the latter system in sub-critical spaces, in particular $L^2$, as the modified nonlinear term does not admit the cancellations by which the necessary energy estimates are usually obtained. 

A solution of the Navier--Stokes equations (in magnetization variables \re{eqNSEW1}) satisfies an inhomogeneous version of the model system where the right-hand side depends on $\nabla(w-\PP w)$. Using this fact one can follow the analysis above to arrive at a necessary criterion for blowup of the Navier--Stokes equations, which to our knowledge is new. Indeed, suppose that $w$ is a classical solution of
\begin{equation}\label{eqConcInhom}
\p_t w + (\PP w\cdot\nabla)w +\frac{1}{2}\nabla |w|^2-\Lap w=f(x,t)
\end{equation}
for $f\in C^1([0,T]\x\TT^3)$. In particular \re{eqConcInhom} is equivalent to the Navier--Stokes equations when $f=(\nabla(w-\PP w))^\top w$. Now, following the proof of Lemma \ref{lemMP} (see also \cite{Kiselev_Ladyzhenskaya}) we obtain 
\begin{equation}\label{eqMPInhom}
\|w(t)\|_{L^\infty}\leq \frac{1}{\alpha}\e^{\alpha t}\sup_{s\leq t}\|\e^{-\alpha s}f(s)\|_{L^\infty}+\e^{\alpha t}\|w_0\|_{L^\infty}
\end{equation}
for any $\alpha >0$.  An estimate similar to \re{eqBurgersW1Linftyapp1} also yields
\[
\|w(t)\|_1^2\leq \|w_0\|_1^2\e^{c\int_0^t\|w\|_{L^\infty}^2}+c\int_0^t\|f(s)\|_{L^2}^2\e^{c\int_s^t\|w\|_{L^\infty}^2}\ \d s.
\]
It follows that if $\|u\|_1=\|\PP w\|_1$ becomes unbounded at time $T^\ast>0$ then  
\[
\int_{t_0}^{T^\ast}\sup_{s\in[t_0,t]}\|(\nabla(w-\PP w))^\top w(s)\|^2_{L^\infty(\TT^3)}\ \d t=\infty,
\] where $w$ is any solution of \re{eqNSEW1} given by Proposition \ref{propNSE_W_strong}, corresponding to $u$ on a time interval $[t_0,T^\ast)$. It is worth noting that $w(t_0)=u(t_0)$ (by construction), so $\nabla(w-\PP w)(t_0)=0$, for each such solution $w$.

Another question that merits further investigation is whether interesting one-parameter families of systems can be constructed to interpolate between the model and the classical system. For example, one might consider systems of the form
\[
\p_tw +(\PP w\cdot\nabla)w+ (\nabla A_\lambda w)^\top w -\Lap w=0
\]
where $A$ is a family of operators such that $A_0=\id$ and $A_1=\PP$. An obvious example is $A_\lambda=\lambda \PP + (1-\lambda)\id$. In this case, a calculation similar to that in Proposition \ref{propELNS_NS} shows that the corresponding system for the velocity $u_\lambda$ is
\begin{equation}\label{eqConcInterp}
\p_t u_\lambda +\lambda(u_\lambda\cdot\nabla)u_\lambda+(1-\lambda)(u_\lambda\cdot\nabla)w-\Lap u_\lambda+\nabla p=0,\quad \nabla\cdot u_\lambda=0,
\end{equation}
where $w$ denotes the solution of the model system \re{eqBurgersW1}, with initial data $w_0=u(0)$. 
Now the proof of the maximum principle in Lemma \ref{lemMP} fails for $\lambda>0$ in this case. However, using the bounds on $w\in L^\infty(\varepsilon,T;H^2)$, for example, an easy argument shows that for any $T>0$ there exists $\lambda_0\in(0,1)$ such that, if $u_\lambda$ is a (local) classical solution of \re{eqConcInterp}, then $\|u_\lambda\|_1$ remains bounded on $[0,T]$ for any $\lambda<\lambda_0$. That is, we can interpolate between the minimum existence time for such solutions of the model system and the Navier--Stokes equations, using $\lambda$. 

More careful choices for the interpolants $A_\lambda$ could lead to new insights about the Navier--Stokes equations, within families of related systems.  For example, if one could prove global well-posedness results for all sufficiently small $\lambda>0$, the behaviour of solutions at critical values of $\lambda>0$ would be of great potential interest.

Finally, we remark that an investigation of regularity criteria for the Navier--Stokes equations in the magnetization variables formulation (\ref{eqNSEW1}--\ref{eqNSEW2}) may be worthwhile, particularly given the apparent similarity with advection-diffusion systems (as discussed in \cite{Friedlander_Vicol_2011,Silvestre_Vicol_2012}, for example). Indeed, it would be natural to study the $L^\infty(0,T;L^3)$ endpoint of Serrin's regularity condition in this formulation, which might yield an alternative approach to the celebrated result of Escauriaza, Seregin and \v Sver\'ak \cite{Escauriaza_S_S_2003}.  

\section*{Acknowledgements}
We would like to thank James Robinson for his thoughtful advice.

BCP was partially supported by an EPSRC Doctoral Training Award and partially by postdoctoral funding from ERC 616797.

The final version of this article will appear with DOI:
10.1016/j.jde.2017.12.036
%************************************************************
%Bibliography
%************************************************************
%\bibliographystyle{plain}
%\bibliography{../../Bib1}{}

\begin{thebibliography}{10}

\bibitem{Calderon_1990}
C.P. Calder\'on.
\newblock {Existence of weak solutions for the Navier-Stokes equations with
  initial data in $L^p$}.
\newblock {\em Trans. Amer. Math. Soc}, 318(1):179--200, 1990.

\bibitem{Chae_Model_2015}
D.~Chae.
\newblock Global regularity for a model {N}avier-{S}tokes equations on
  {$\RR^3$}.
\newblock {\em Analysis (Berlin)}, 35(3):195--200, 2015.

\bibitem{Chemin_book_2006}
J-Y. Chemin, B.~Desjardins, I.~Gallagher, and E.~Grenier.
\newblock {\em {Mathematical Geophysics: An introduction to rotating fluids and
  the Navier-Stokes equations}}, volume~32 of {\em Oxford lecture series in
  mathematics and its applications}.
\newblock Oxford University Press, Oxford, 2006.

\bibitem{Chorin_Book}
A.J. Chorin.
\newblock {\em {Vorticity and Turbulence}}.
\newblock Springer-Verlag, New York, 1994.

\bibitem{Const_EL_Local_2000}
P.~Constantin.
\newblock {An Eulerian-Lagrangian Approach For Incompressible Fluids: Local
  Theory}.
\newblock {\em Journal of the AMS}, 14(2):263--278, 2000.

\bibitem{ConstFoias}
P.~Constantin and C.~Foias.
\newblock {\em {Navier-Stokes Equations}}.
\newblock The University of Chicago Press, Chicago, 1988.

\bibitem{Escauriaza_S_S_2003}
L.~Escauriaza, G.A. Seregin, and V.~\v{S}ver\'{a}k.
\newblock {$L_{3,\infty}$-solutions of Navier--Stokes equations and backwards
  uniqueness}.
\newblock {\em Russian Math. Surveys}, 58:211--250, 2003.

\bibitem{Evans_PDEBook}
L.C. Evans.
\newblock {\em {Partial differential equations: second edition}}.
\newblock American Mathematical Society, Providence R.I., 2010.

\bibitem{Fefferman_Clay}
C.~Fefferman.
\newblock {Existence and Smoothness of the Navier-Stokes Equation}.
\newblock In J.A. Carlson, A.~Jaffe, and A.~Wiles, editors, {\em {The Millenium
  Prize Problems}}. AMS, 2006.

\bibitem{Friedlander_Vicol_2011}
S.~Friedlander and V.~Vicol.
\newblock {Global well-posedness for an advection-diffusion equation arising in
  magneto-geostrophic dynamics}.
\newblock {\em Ann. Inst. H. Poincaré Anal. Non Linéaire}, 28:283--301, 2011.

\bibitem{Fujita_Kato_1964}
H.~Fujita and T.~Kato.
\newblock {On the Navier--Stokes initial value problem. I.}
\newblock {\em Arch. Rational Mech. Anal.}, 16:269--315, 1964.

\bibitem{GaldiNotes}
G.P. Galdi.
\newblock {An Introduction to the Navier-Stokes Initial-Boundary Value
  Problem}.
\newblock In G.P. Galdi, J.G. Heywood, and R.~Rannacher, editors, {\em
  {Fundamental Directions in Mathematical Fluid Mechanics}}. Birkhuser-Verlag,
  2000.

\bibitem{Galdi_Maremonti_1988}
G.P. Galdi and P.~Maremonti.
\newblock Regularity of weak solutions of the {N}avier-{S}tokes system in
  arbitrary domains.
\newblock {\em Ann. Univ. Ferrara Sez. VII (N.S.)}, 34:59--73, 1988.

\bibitem{Kiselev_Ladyzhenskaya}
A.~Kiselev and O.A. Ladyzhenskaya.
\newblock {On the existence and uniqueness of the solution of the nonstationary
  problem for a viscous, incompressible fluid. (Russian)}.
\newblock {\em Izv. Akad. Nauk SSSR. Ser. Mat.}, 21:655--680, 1957.

\bibitem{LadyzhenskayaFluidBook}
O.A. Ladyzhenskaya.
\newblock {\em {The mathematical theory of viscous incompressible flow}}.
\newblock Gordon \& Breach, New York, 1969.

\bibitem{Leray_1934}
J.~Leray.
\newblock {Sur le mouvement d'un liquide visqueux emplissant l'espace}.
\newblock {\em Acta Math.}, 63:193--248, 1934.

\bibitem{Marin-Rubio_JCR_Sad_2013}
P.~Mar\'in-Rubio, J.C. Robinson, and W.~Sadowski.
\newblock {Solutions of the 3D Navier–Stokes equations for initial data in
  $H^{1/2}$: Robustness of regularity and numerical verification of regularity
  for bounded sets of initial data in $H^{1}$}.
\newblock {\em J. Math. Anal. Appl.}, 400:76--85, 2013.

\bibitem{MontSPok_2002}
S.~Montgomery-Smith and M.~Pokorn\'y.
\newblock {A counterexample to the smoothness of the solution to an equation
  arising in fluid mechanics}.
\newblock {\em Comment. Math. Univ. Carolin.}, 43:61--75, 2001.

\bibitem{Pooley_PhD}
B.C. Pooley.
\newblock {\em On some alternative formulations of the Euler and Navier--Stokes
  equations}.
\newblock PhD thesis, University of Warwick, 2016.

\bibitem{P_JCR_EL_2015}
B.C. Pooley and J.C. Robinson.
\newblock {An Eulerian-Lagrangian form for the Euler equations in Sobolev
  spaces}.
\newblock {\em J. Math. Fluid. Mech.}, 18:783--794, 2016.

\bibitem{P_JCR_Burgers_2015}
B.C. Pooley and J.C. Robinson.
\newblock {Well-posedness for the diffusive 3D Burgers equations with initial
  data in $H^{1/2}$}.
\newblock In W.~Sadowski J.C.~Robinson, J.L.~Rodrigo and A.~Vidal-L\'opez,
  editors, {\em Recent Progress in the Theory of the Euler and Navier--Stokes
  Equations}, pages 137--153. Cambridge University Press, 2016.

\bibitem{JCR_2006}
J.C. Robinson.
\newblock {The 3d Navier-Stokes equations}.
\newblock {\em Bol. Soc. Esp. Mat. Apl. S$\vec{e}$ma}, 35:43--71, 2006.

\bibitem{JCR_NSE_book}
J.C. Robinson, J.L. Rodrigo, and W.~Sadowski.
\newblock {\em {The three--dimensional Navier--Stokes equations. Classical
  Theory}}.
\newblock Cambridge University Press, Cambridge, 2016.

\bibitem{Roubicek_book}
T.~Roub\'{\i}\v{c}ek.
\newblock {\em {Nonlinear Partial Differential Equations with Applications}},
  volume 153 of {\em International Series of Numerical Mathematics}.
\newblock {Birkh\"auser}, Basel, 2013.

\bibitem{Silvestre_Vicol_2012}
L.~Silvestre and V.~Vicol.
\newblock {H\"older continuity for a drift-diffusion equation with pressure}.
\newblock {\em Ann. Inst. H. Poincaré Anal. Non Linéaire}, 29:637--652, 2012.

\bibitem{Tao_2014}
Terence Tao.
\newblock Finite time blowup for an averaged three-dimensional
  {N}avier-{S}tokes equation.
\newblock {\em J. Amer. Math. Soc.}, 29(3):601--674, 2016.

\bibitem{Weber}
H.~Weber.
\newblock {Uber eine Transformation der Hydrodynamischen Gleichungen}.
\newblock {\em J. Reine Angew. Math.}, 68:286--292, 1868.

\end{thebibliography}

\end{document}